\documentclass[11pt,twoside]{amsart}
\usepackage{todonotes}
\usepackage{hyperref}
\hypersetup{
    colorlinks=true,
    allcolors=gray
}
\usepackage[svgnames]{xcolor}
\usepackage[margin=1.35in]{geometry}
\usepackage{slashed}
\usepackage{amssymb}
\usepackage{amsmath}
\usepackage{amsthm}
\usepackage{mathrsfs}
\usepackage{dutchcal}
\usepackage{color}
\usepackage{wrapfig}
\usepackage{import}
\usepackage{xifthen}
\usepackage{pdfpages}
\usepackage{transparent}
\usepackage{eucal}
\usepackage{tensor}
\usepackage{parskip}
\usepackage{mathtools}
\usepackage{extpfeil}
\usepackage{thmtools}{}
\usepackage{tikz}
\usepackage{tikz-cd}
\usetikzlibrary{arrows.meta}

\tikzset{
    symbol/.style={%
        draw=none,
        every to/.append style={%
            edge node={node [sloped, allow upside down, auto=false]{$#1$}}}
    }
}{}
\usepackage[bbgreekl]{mathbbol}
\usepackage[shortlabels]{enumitem}
\usepackage{setspace}

\DeclareMathOperator{\Ric}{Ric}

\newcommand{\Z}{\mathbb{Z}}
\newcommand{\R}{\mathbb{R}}

\renewcommand{\phi}{\varphi}

\newcommand{\C}{\mathbb{C}}

\renewcommand{\tilde}{\widetilde}

\newcommand{\qand}{\quad\text{and}\quad}

\declaretheorem{theorem}
\declaretheorem[numbered=no,
name=Theorem]{theorem*}
\declaretheorem[numbered=no,
name=Corollary]{corollary*}
\declaretheorem[sibling=theorem]{lemma}
\declaretheorem[sibling=theorem]{proposition}
\declaretheorem[sibling=theorem]{corollary}

\declaretheorem[style=remark,sibling=theorem]{remark}

\declaretheoremstyle[
spaceabove=6pt, spacebelow=6pt,
headfont=\normalfont\bfseries,
notefont=\mdseries, notebraces={(}{)},
bodyfont=\normalfont,
postheadspace=1em
]{definition}
\declaretheorem[style=definition,sibling=theorem]{definition}

\usepackage[T1]{fontenc}
\usepackage{microtype}

\title{An almost K\"ahler Cheeger--Gromoll splitting theorem with applications}
\author{Anthony Nguyen, Shengzhen Ning, and Lauren Pusey-Nazzaro}
\AtEndDocument{{\footnotesize
\textsc{School of Mathematics, University of Minnesota, Minneapolis, MN, US} \par
		\textit{E-mail address}: \texttt{nguy5343@umn.edu} \par
		\addvspace{\medskipamount}
		\textsc{Department of Mathematics, University of Maryland, College Park, MD, US} \par
		\textit{E-mail address}: \texttt{ning0040@umn.edu} \par
		\addvspace{\medskipamount}
		\textsc{School of Mathematics, University of Minnesota, Minneapolis, MN, US} \par
		\textit{E-mail address}: \texttt{pusey005@umn.edu} \par
		
}}
\begin{document}
\begin{abstract}
In this paper, we establish an almost K\"ahler analogue of the Cheeger--Gromoll splitting theorem for complete almost K\"ahler manifolds with nonnegative Ricci curvature. As applications, we use the splitting to obtain Goldberg-type integrability results and establish a relation between symplectic non-hyperbolicity and nonnegative Ricci curvature via a theorem of Bangert.
\end{abstract}
\maketitle

\section{Introduction}\label{sec:intro}
The classical Cheeger--Gromoll splitting theorem
\cite{CGsplitting} states that a line in a complete Riemannian manifold with nonnegative Ricci curvature splits off isometrically. The splitting has a natural refinement in K\"ahler geometry. Indeed, since the complex structure is parallel, the maximal Euclidean factor in the universal cover is $J$-invariant and therefore has even dimension, giving a complex Euclidean factor. This K\"ahler splitting is a fundamental ingredient in the structure theorem of Campana--Demailly--Peternell \cite{CDP15} for compact K\"ahler manifolds with semipositive anticanonical bundle. More recently, Chu--Lee--Zhu \cite{ChuLeeZhu} extended the K\"ahler splitting to manifolds with nonnegative mixed curvature.

In this paper, motivated by the search for symplectic analogues of classical theorems in Riemannian geometry, we investigate the splitting theorem in the almost K\"ahler context. A notable precedent is McDuff's symplectic analogue of the Cartan--Hadamard theorem in the K\"ahler setting \cite{McDuff88}, which was recently extended to the almost K\"ahler setting by Cristofaro-Gardiner \cite{DCG26}. 

An {\bf almost K\"ahler manifold} is a tuple $(X,\omega,J,g)$ where $(X,g)$ is a Riemannian manifold, $J\colon TX \to TX$ is an almost complex structure, and $\omega \in \Omega^2(X)$ is a symplectic form satisfying $g(-,-) = \omega(-,J-)$. 
Let $i\colon T\C \to T\C$ denote the standard complex structure on $\C \cong \R_s \times \R_t$. We call $(X,\omega, J, g)$ {\bf split} if it is isomorphic to 
\[ 
(\C \times X', ds \wedge dt + \omega',i\oplus J', ds^2 + dt^2 + g'),
\]
where $(X',\omega',J',g')$ is an almost K\"ahler manifold with $\dim_{\C}X'=\dim_{\C}X-1$.
Let $\pi \colon \overline X \to X$ denote the universal cover of $X$. The map $\pi$ determines an almost K\"ahler structure on $\overline X$ via pullback:
\[
(\overline X, \overline \omega, \overline J, \overline g):= (\overline X, \pi^* \omega, \pi^*J, \pi^* g).
\]
This choice of almost K\"ahler structure on $\overline X$ will be tacitly understood in what follows.

Here is our main result.
\begin{theorem}\label{thm:main}
     Let $(X,\omega,J,g)$ be an almost K\"ahler manifold such that  $g$ is complete with $\Ric_g\geq 0$. Suppose that
    $(X,g)$ contains a Riemannian line $\gamma\colon \R \to X$.
    Then its universal cover $(\overline X, \overline \omega, \overline J, \overline g)$ splits.
\end{theorem}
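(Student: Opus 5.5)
The plan is to run the classical Cheeger--Gromoll argument on the universal cover and then show that the almost K\"ahler structure forces a \emph{second} parallel direction, namely $\overline J$ applied to the split-off direction. Since $\pi$ is a local isometry, $(\overline X,\overline g)$ is complete with $\Ric\geq 0$. A lift $\overline\gamma$ of $\gamma$ is still a line, because $d_{\overline g}\geq \pi^*d_g$ and $\gamma$ already realises distance. Cheeger--Gromoll applied to $\overline\gamma$ gives a Busemann function $b$ with $V:=\nabla b$ a parallel unit vector field. Consequently $\overline X\cong \R\times N$ isometrically, and $V^\flat=db$ is closed.

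Next set $\alpha:=\iota_V\overline\omega$. This is the $1$-form $g$-dual to $\pm\overline J V$, so $|\alpha|\equiv 1$. I would show that $\alpha$ is harmonic in three steps.
\begin{enumerate}
\item An almost K\"ahler form is coclosed: $\delta\overline\omega=0$, since the Lee form vanishes when $d\overline\omega=0$.
\item Because $V$ is parallel, $\iota_V$ anticommutes with $\delta$, so $\delta\alpha=\pm\iota_V\delta\overline\omega=0$.
\item By Cartan, $d\alpha=L_V\overline\omega$. Since $V$ is Killing, $L_V$ commutes with $\delta$, and so $\delta d\alpha=L_V\delta\overline\omega=0$.
\end{enumerate}
Hence $\Delta_H\alpha=0$. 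The Bochner formula
\[
\tfrac12\Delta|\alpha|^2=|\nabla\alpha|^2-\langle\Delta_H\alpha,\alpha\rangle+\Ric(\alpha^\sharp,\alpha^\sharp)
\]
with $|\alpha|$ constant and $\Ric\geq0$ then forces $\nabla\alpha=0$. Thus $W:=\overline J V$ is parallel. It is also a unit field orthogonal to $V$, since $g(V,\overline JV)=\overline\omega(V,V)\cdot(\pm1)=0$. I expect this step to be the main obstacle. It is the only place where the almost K\"ahler condition enters, because $\overline J$ itself is not parallel. The delicate points are getting the sign and commutation conventions right and justifying $\delta\overline\omega=0$.

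With two orthonormal parallel fields $V,W$, the de Rham decomposition (or simply iterating the flow argument) gives an isometric splitting $\overline X\cong\R_s\times\R_t\times X'$, where $\partial_s=V$, $\partial_t=W$, and $g=ds^2+dt^2+g'$.

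It remains to check that $\overline\omega$ and $\overline J$ split as well. The span of $V,W$ is $\overline J$-invariant, and $\overline J$ is $g$-orthogonal, so the complement $TX'$ is $\overline J$-invariant too. For $Y\in TX'$ we have $\overline\omega(V,Y)=\pm g(W,Y)=0$ and $\overline\omega(W,Y)=\pm g(V,Y)=0$, while $\overline\omega(V,W)=\pm1$. After possibly reversing $t$, this gives $\overline\omega=ds\wedge dt+\omega_{s,t}$, where $\omega_{s,t}$ restricts to a nondegenerate form on each slice. Invariance in $s$ and $t$ follows from two Lie derivative computations:
\[
L_V\overline\omega=d\alpha=0\quad\text{(since $\alpha$ is parallel)},\qquad L_W\overline\omega=d(\iota_W\overline\omega)=\pm d(V^\flat)=\pm d(db)=0.
\]
Hence $\omega'$ is a well-defined closed nondegenerate form on $X'$, and $J'=g'^{-1}\omega'$ is compatible with it. This yields the split form $(\C\times X',ds\wedge dt+\omega',i\oplus J',ds^2+dt^2+g')$.
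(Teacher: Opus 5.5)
Your proposal is correct and follows essentially the same route as the paper. Cheeger--Gromoll gives a parallel field $V$. Harmonicity of $\alpha=\iota_V\overline\omega$ comes from $\delta\overline\omega=0$, from $\iota_V$ anticommuting with $\delta$, and from $L_V$ commuting with $\delta$; the Bochner formula then makes $\overline JV$ parallel. De Rham together with the Cartan/Lie-derivative computations splits $\overline\omega$ and $\overline J$. The only cosmetic difference is the order of steps: you lift the line to $\overline X$ before splitting and apply de Rham there to the pair $V,\overline JV$, whereas the paper splits $X\cong\R\times Y$ first and passes to the universal cover of $Y$ only to split off the $\overline JV$ direction.
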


\subsection{Automatic integrability}
A famous problem in almost K\"ahler geometry is the Goldberg conjecture \cite{Goldberg1969}, which asserts that every compact almost K\"ahler Einstein manifold is necessarily K\"ahler, hence K\"ahler--Einstein. Sekigawa \cite{Sekigawa} proved the conjecture under the assumption of nonnegative scalar curvature. Further Goldberg-type results have been established under assumptions weaker than the Einstein condition; see \cite{Draghici1994,Draghici1995,Draghici1999,ApostolovDraghici2000,ApostolovDraghiciKotschick1999,Kirchberg} and the survey articles \cite{apostolov_draghici_survey,OguroSekigawa2005} for progress in this direction.


Using the almost K\"ahler splitting \autoref{thm:main}, we determine the four-dimensional symplectic manifolds for which nonnegative Ricci curvature forces integrability of the almost complex structure. We also obtain a complementary Goldberg-type result in all dimensions in which more delicate curvature assumptions are replaced by the topological condition of symplectic asphericity. 
Recall that a symplectic manifold $(X,\omega)$ is \textbf{symplectically aspherical} if for any smooth map $f\colon S^2 \to X$ we have $\int_{S^2}f^*\omega = 0$. 

\begin{theorem}[Automatic integrability]\label{thm:aspherical-flat}
Let $(X,\omega,J,g)$ be a closed almost K\"ahler manifold with
$\Ric_g\geq 0$. If 
\begin{itemize}[nosep]
    \item $(X,\omega)$ is symplectically aspherical, or
    \item $\dim_\R X=4$ and $X$ is not diffeomorphic to a rational surface,
\end{itemize}
 then $J$ must be integrable.
\end{theorem}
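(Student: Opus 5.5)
The plan is to combine \autoref{thm:main} with the Cheeger--Gromoll structure theory for closed manifolds with $\Ric\ge 0$. Since $X$ is closed with $\Ric_g\ge 0$, the universal cover $(\overline X,\overline g)$ splits isometrically as $\R^k\times N$ with $N$ compact and simply connected. I will apply \autoref{thm:main} repeatedly to get an almost K\"ahler version of this. Whenever $\overline X$ contains a line, it splits off a $\C$-factor, and the complementary factor $X'$ is again complete with $\Ric\ge 0$ and simply connected. Lines in $\overline X$ that lie in the Euclidean directions persist in $X'$, so I can iterate. The outcome is an almost K\"ahler isomorphism $\overline X\cong \C^m\times N$, where $N$ is compact, simply connected, almost K\"ahler, and has $\Ric\ge0$; the point of iterating is that this decomposition respects $J$. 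Integrability of $J$ is a local property and is preserved under products with flat $\C^m$, so it suffices to show that $N$ is a point in the aspherical case, or that $N$ is K\"ahler in the four-dimensional case.

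\emph{Aspherical case.} If $\dim N>0$, then $N$ is a compact simply connected symplectic manifold. Then $H^2(N;\R)\ni[\omega_N]\neq 0$, because $[\omega_N]^n\neq 0$. By Hurewicz, $\pi_2(N)\cong H_2(N;\Z)$, so some sphere $f\colon S^2\to N$ has $\int f^*\omega_N\neq0$. Consider the composite $S^2\to \{0\}\times N\subset \overline X\to X$. The form $\overline\omega$ restricted to $\{0\}\times N$ is $\omega_N$, and $\pi^*\omega=\overline\omega$, so this composite has nonzero symplectic area. That contradicts symplectic asphericity. Hence $N$ is a point, $\overline X=\C^m$ is flat with the standard $J$, and $J$ is integrable. (If $\dim N=0$ from the outset, $X$ is a flat K\"ahler quotient.)

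\emph{Four-dimensional case.} If $m=2$ we are done as above. If $m=1$, then $N$ is a compact simply connected surface, hence $S^2$. Any almost complex structure on a surface is integrable, so $J=i\oplus J_N$ is integrable. If $m=0$, then $\pi_1(X)$ is finite, and $\overline X$ is a closed simply connected symplectic 4-manifold with $\Ric\ge 0$. This case is the main obstacle. The key input is the Weitzenb\"ock formula for self-dual harmonic forms. Harmonic self-dual 2-forms satisfy $\Delta\alpha=\nabla^*\nabla\alpha-2W^+(\alpha)+\tfrac{s}{3}\alpha$, and $\omega$ is one such form. Integrating the formula against $\omega$ with $|\omega|^2$ constant, together with the almost K\"ahler identity $s^*-s=|\nabla J|^2/2$, I would try to show the following: when $\Ric\ge 0$ and $b^+\ge 2$ (or more generally when $X$ is not rational), one gets $\nabla\omega=0$. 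A cleaner alternative is to use Seiberg--Witten/Taubes theory. For a symplectic 4-manifold that is not rational or ruled, $K\cdot[\omega]\ge 0$ by Taubes. By Liu's results, the Chern--Weil representative of $c_1$ under $\Ric\ge0$ gives $c_1\cdot[\omega]\ge 0$, in fact positive unless the curvature integrals vanish. Then I would use the integral formula $\int (s^*+s)\,d\mu = 8\pi\, c_1\cdot[\omega]$, together with $s^*-s=\tfrac12|\nabla J|^2\ge0$ and $s\ge 0$, so that $\int|\nabla J|^2\le 0$ once $c_1\cdot[\omega]\le0$. This forces $\nabla J=0$. Ruled non-rational surfaces have infinite $\pi_1$ and are covered by the $m\ge1$ cases. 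Pushing this Taubes-type sign argument through cleanly on the finite cover (where the non-rational hypothesis must be checked to persist) is where I expect the real difficulty.
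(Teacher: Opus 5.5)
Your argument is correct. In the aspherical case, and for infinite $\pi_1$ in dimension four, it is the paper's argument: the iterated splitting of \autoref{cor:AK_structure} with Hurewicz to force $\dim N=0$, and respectively a line in $\overline X$ plus \autoref{lem:noncompact}. The real difference is the four-dimensional case with finite $\pi_1$.

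\begin{itemize}
\item The paper sorts by rational/ruled versus not. For every non-rational, non-ruled $X$ (including $T^4$ and hyperelliptic surfaces) it combines Kazdan--Warner with the Liu/Ohta--Ono positive-scalar-curvature obstruction to get $\Ric_g\equiv 0$. It then invokes Sekigawa's theorem for Einstein almost K\"ahler metrics.
\item You sort by $\pi_1$ instead. The splitting disposes of all infinite-$\pi_1$ cases, and the Seiberg--Witten input is confined to finite $\pi_1$. There, $K_X\cdot[\omega]\ge 0$ together with $\int(s+s^*)\,d\mu=8\pi\,c_1\cdot[\omega]$ and $s^*-s=\tfrac12|\nabla J|^2$ gives $\nabla J=0$ and $s\equiv 0$ in two lines. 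Hence also $\Ric_g\equiv 0$, so LeBrun's classification remains available.
\end{itemize}

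Your route replaces Kazdan--Warner and Sekigawa by an elementary integral identity. The paper's route uses only off-the-shelf statements and is organized around the classification in \autoref{thm:dim4}.

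Two fixes to your last step. First, the difficulty you anticipate does not arise. Run the integral identity on $X$ itself: $X$ has finite $\pi_1$, so it is not irrational ruled, and it is not rational by hypothesis, so no passage to a cover is needed. Second, the sign $K_X\cdot[\omega]\ge 0$ is not ``by Taubes'' in general, because finite-$\pi_1$ candidates with $b^+=1$ (e.g.\ Enriques surfaces) must also be handled. Cite instead Liu's theorem that $K\cdot[\omega]<0$ forces $X$ to be rational or ruled, reducing to the minimal model since blowing up only increases $K\cdot[\omega]$. This is the same Seiberg--Witten input behind the obstruction the paper uses. The Weitzenb\"ock alternative you sketch is unnecessary.
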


 \begin{remark}\label{rmk:perturb}
    The bulleted assumptions in \autoref{thm:aspherical-flat} cannot both be removed. Let $(X,J_0)$ be any del Pezzo surface, so that $X$ is diffeomorphic to either $\mathbb{CP}^1\times\mathbb{CP}^1$ or $\mathbb{CP}^2\#k\overline{\mathbb{CP}}{}^2$ for $0\leq k\leq 8$. Since $c_1(X,J_0)$ can be represented by a positive $(1,1)$-form, Yau's solution of the Calabi conjecture yields a K\"ahler structure $(\omega,J_0,g_0)$ with $\Ric_{g_0}>0$. One can then choose a non-integrable $\omega$-compatible almost complex structure $J$ arbitrarily close to $J_0$. The associated almost K\"ahler metric
    \(
        g_J(\cdot,\cdot):=\omega(\cdot,J\cdot)
    \)
    is arbitrarily close to $g_0$ in the $C^\infty$-topology. Since positive Ricci curvature is an open condition in the $C^2$-topology, $g_J$ still satisfies $\Ric_{g_J}>0$ provided $J$ is chosen sufficiently close to $J_0$. 
\end{remark}

\subsection{Symplectic non-hyperbolicity}
We call a symplectic manifold $(X,\omega)$ {\bf hyperbolic} if for \emph{every} $\omega$-tame almost complex structure $J$, every $J$-holomorphic map $\C \to X$ is constant; we call $(X,\omega)$ {\bf non-hyperbolic} if there exists a nonconstant $J$-holomorphic map $\C \to X$ for each $\omega$-tame $J$. This definition is a symplectic analogue of Brody hyperbolicity in complex geometry (see \autoref{section:hyperbolic}).

Negatively curved manifolds exhibit many flavors of hyperbolicity; see, for example, \cite{kobayashi2005hyperbolic, Royden, Gromovhyperbolic,Demailly_algebraic_1997, Zheng02, ChenYang}. 
On the other hand, as an application of the almost K\"ahler splitting \autoref{thm:main}, we establish a nonnegative curvature criterion for symplectic non-hyperbolicity. Our proof employs methods developed by Bangert \cite{Bangert1998Existence} for proving non-hyperbolicity for the standard symplectic form on $T^{2n}$. 






\begin{corollary}\label{thm:non-hyperbolicity}
Let $(X,\omega,J,g)$ be a closed almost K\"ahler manifold with
\(
    \Ric_g\geq 0.
\)
 If $(X,\omega)$ is symplectically aspherical, then $\omega$ is non-hyperbolic.
\end{corollary}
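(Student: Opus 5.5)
The plan is to use \autoref{thm:main} repeatedly on the universal cover $\overline X$. This will show that $\omega$ is cohomologous to a form pulled back from a flat torus, after which Bangert's theorem \cite{Bangert1998Existence} produces $J$-holomorphic planes for every tame $J$.

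\textbf{Step 1: splitting off the full Euclidean factor.} Since $X$ is closed with $\Ric_g\ge 0$, the classical Cheeger--Gromoll structure theory gives an isometric splitting $\overline X=\R^k\times N$, where $N$ is compact and simply connected. Moreover, $\pi_1(X)$ contains a finite-index free abelian subgroup $\Z^k$ acting cocompactly by translations on the $\R^k$ factor. Symplectic asphericity rules out $k=0$: in that case $\pi_1$ is finite, $\overline X$ is closed, and $\overline\omega$ is exact on a closed manifold with $\overline\omega^n>0$, which is impossible. I would then apply \autoref{thm:main} inductively. Any line in the Euclidean factor gives a splitting $\C\times X'$, and the remaining factor $X'$ still contains lines of the $\R^{k}$ factor whenever $k\ge 3$. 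Iterating, I expect $\overline X\cong \C^m\times X''$ almost K\"ahler isometrically, with $X''$ having no lines. Since the de Rham-type Euclidean factor is maximal, $X''=\R^{k-2m}\times N$, so $k-2m\in\{0,1\}$.

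\textbf{Step 2: asphericity forces $N$ to be a point.} This is the step I expect to be the main obstacle. Here $N$ is compact and simply connected, and the symplectic form on $\overline X$ is $\omega_{\C^m}+\omega''$. Asphericity of $X$ passes to $\overline X$, since spheres lift, so $\int_{S^2}f^*\overline\omega=0$ for all $f\colon S^2\to\overline X$. As $\pi_2(\overline X)=\pi_2(N)=H_2(N)$ by Hurewicz, the class $[\overline\omega]$ vanishes on $H_2(N)$.

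In the case $k-2m=0$, the factor $X''=N$ is a closed symplectic manifold, and its symplectic form would be exact. This is a contradiction unless $N$ is a point.

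In the case $k-2m=1$, the factor $X''=\R\times N$ is odd-dimensional over $N$, and I would try to rule it out directly. $X''$ is an almost K\"ahler manifold in which the line direction $\partial_r$ is parallel. Then $J\partial_r$ is a unit Killing field tangent to $N$ and $\omega''$-dual to $dr$. I would try to show that $J\partial_r$ is also parallel, using the almost K\"ahler identities, and hence produces a line in $N$ (its flow lines lift to geodesic lines). This contradicts compactness of $N$. So $\overline X=\C^m$ with its flat metric, and $X$ is a flat closed manifold, finitely covered by a torus $T^{2m}$.

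\textbf{Step 3: applying Bangert's result.} Passing to the finite torus cover $T^{2m}$, the pulled-back form is a symplectic form that is constant up to an exact term. Nonhyperbolicity is preserved under finite covers, since pulled-back tame $J$ remain tame and holomorphic planes project down. On the torus, $\omega$ is cohomologous to a linear symplectic form. Bangert's existence theorem \cite{Bangert1998Existence} gives nonconstant $J$-holomorphic planes for every $\omega$-tame $J$, and that argument uses only the energy estimates of the cohomology class, so it adapts from the standard form to $\omega$. Composing with the covering map yields the required planes in $X$.
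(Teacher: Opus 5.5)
\textbf{The gap is in Step 3.} You record only that the pulled-back form on the torus cover is ``constant up to an exact term.'' You then assert that Bangert's argument ``uses only the energy estimates of the cohomology class.'' That is unjustified. \autoref{prop:Bangert} requires $J$ to be uniformly tamed by $\omega_0$ itself, and tameness is a pointwise condition on the form, not on its cohomology class. A $J$ tamed by $\omega_{\mathrm{lin}}+d\beta$ need not be tamed by $\omega_{\mathrm{lin}}$. Moser's trick is unavailable unless you have a path of symplectic forms in the class. The known extension (Li--Wu) concerns asymptotically standard forms on $\R^{2n}$, which a periodic perturbation is not. So as written, Step 3 rests on an unproven generalization of Bangert's theorem.

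The fix is already in your hands. \autoref{thm:main} splits $\overline\omega$ itself, not just its class. Once $N$ is a point, you have $(\overline X,\overline\omega,\overline g)\cong(\R^{2m},\omega_0,g_0)$ on the nose. The deck group acts by isometries of $g_0$ preserving $\omega_0$. Its finite-index translation lattice $\Lambda$ gives the torus cover $\R^{2m}/\Lambda$, on which the pullback of $\omega$ is an honest linear form. Then \autoref{thm:bangert} applies verbatim to the pullback of any $\omega$-tame $J'$, and projecting gives the nonconstant plane in $X$.

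\textbf{Smaller point: the case $k-2m=1$.} This case is spurious and needs no new argument. The product $\R\times N$ contains a line, so \autoref{thm:main} applies once more. The fact you say you would ``try to show'' ($J\partial_r$ parallel) is exactly \autoref{lem:parallel}. Your own stopping rule (``$X''$ has no lines'') already forces $k-2m=0$, so $\{0,1\}$ is internally inconsistent.

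\textbf{Comparison with the paper.} With these repairs, your argument follows the paper's chain: \autoref{cor:AK_structure}, then \autoref{cor:aspherical_nonnegative_is_kahler-flat}, then Bangert. The only real difference is the final step. The paper never passes to a torus. It applies \autoref{prop:Bangert} directly to $\pi^*J'$ on $\R^{2n}$, which is bounded and uniformly tamed by compactness of $X$. It then recenters the disks by deck transformations and extracts an entire curve via Arzel\`a--Ascoli and Gromov's Weierstrass theorem. Your version instead uses Bieberbach to quote Bangert's torus theorem as a black box, which is equally valid once the form is known to be exactly linear.
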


\begin{remark}
Without the symplectic asphericity assumption, the corresponding symplectic non-hyperbolicity statement is unknown to us. Kamenova--Lu--Verbitsky \cite{KLV14} proved that every integrable complex structure on a K3 surface, including non-projective ones, is non-hyperbolic. This can be used to show that the particular $J$ appearing in the hypotheses of \autoref{thm:non-hyperbolicity} is non-hyperbolic in this case; see \autoref{cor:AKnonhyperbolic}. However, this result does not address arbitrary $\omega$-tame almost complex structures, and therefore does not imply the non-hyperbolicity of $\omega$ in our sense.
\end{remark}

\subsection*{Organization of the paper}
In \autoref{section:Riemannianprelim}, we review the main ingredients in the Riemannian Cheeger--Gromoll splitting theorem. The almost K\"ahler splitting \autoref{thm:main} is proved in \autoref{section:AKsplitting}. We then study the integrability of the almost complex structure in \autoref{section:Kahlerness}, where we prove \autoref{thm:dim4} and \autoref{cor:aspherical_nonnegative_is_kahler-flat}; together, these imply \autoref{thm:aspherical-flat}. Finally, in \autoref{section:hyperbolic}, we establish the symplectic non-hyperbolicity result, \autoref{thm:non-hyperbolicity}.

\subsection*{Acknowledgments} The authors would like to thank Dan Cristofaro-Gardiner, Tian-Jun Li, and Sai-Kee Yeung for their interest in this work. The first author was supported by a GAANN Fellowship. The third author was supported by NSF GRFP award \#2237827.

\section{Some Riemannian preliminaries}\label{section:Riemannianprelim}

Throughout this section, all Riemannian manifolds are assumed to be connected and without boundary.  We review the main ingredients of the Cheeger--Gromoll splitting theorem and some of its consequences that will be used in the sequel.

\subsection{Rays, lines, and Busemann functions}

\begin{definition}
Let $(M,g)$ be a complete Riemannian manifold.
A unit-speed geodesic
$
    \gamma\colon [0,\infty)\longrightarrow M
$
is called a {\bf ray} if
\(
    d\bigl(\gamma(s),\gamma(t)\bigr)=t-s
\)
for every $0\leq s\leq t$.
A unit-speed geodesic
$
    \gamma\colon \mathbb{R}\longrightarrow M
$
is called a {\bf Riemannian line}, or a {\bf line}, if
\(
    d\bigl(\gamma(s),\gamma(t)\bigr)=|t-s|
\)
for every $s,t\in\mathbb{R}$.
\end{definition}

Thus, every segment of a ray or a line is globally length minimizing. 
Every complete non-compact Riemannian manifold admits a ray issuing from each point \cite[Lemma~7.3.1]{Petersen2016}, whereas the existence of a line is a stronger condition and does not hold in general. Nevertheless, the existence of a line is automatic when there is a cocompact isometric group action.

\begin{lemma}
\label{lem:cocompact-line}
Let $(M,g)$ be a complete noncompact Riemannian manifold. Suppose that a group $\Gamma$ acts cocompactly on $M$ by isometries. Then $M$ contains a line. In particular, every noncompact regular Riemannian covering of a compact Riemannian manifold contains a line.
\end{lemma}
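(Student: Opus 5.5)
The plan is the standard compactness argument: build long minimizing segments, translate their midpoints into a fixed compact set using the group, and extract a limit by Arzel\`a--Ascoli (equivalently, by compactness of the unit sphere bundle over a compact set).

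Since $\Gamma$ acts cocompactly, there is a compact set $K\subset M$ with $\Gamma\cdot K=M$. Because $M$ is complete and noncompact, it has infinite diameter. So for each $n\in\mathbb N$ we can choose points $p_n,q_n$ with $d(p_n,q_n)=2n$; for example, take any point $p$ and points arbitrarily far from it. By Hopf--Rinow, join them by a unit-speed minimizing geodesic $\sigma_n\colon[-n,n]\to M$, with $\sigma_n(-n)=p_n$ and $\sigma_n(n)=q_n$. Choose $\phi_n\in\Gamma$ with $\phi_n(\sigma_n(0))\in K$, and replace $\sigma_n$ by $\gamma_n:=\phi_n\circ\sigma_n$. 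Isometries map minimizing geodesics to minimizing geodesics, so $\gamma_n$ is still minimizing on $[-n,n]$, and now $\gamma_n(0)\in K$.

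The initial data $(\gamma_n(0),\gamma_n'(0))$ lie in the unit sphere bundle over $K$, which is compact. Pass to a subsequence converging to some $(x,v)$, and let $\gamma(t)=\exp_x(tv)$, which is defined for all $t\in\R$ by completeness. By continuous dependence of geodesics on initial data, $\gamma_n\to\gamma$ uniformly on each compact interval. Now fix $s,t\in\R$. For $n>\max(|s|,|t|)$ we have $d(\gamma_n(s),\gamma_n(t))=|t-s|$, and continuity of $d$ gives $d(\gamma(s),\gamma(t))=|t-s|$. Hence $\gamma$ is a line.

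For the final sentence, suppose $\pi\colon M\to N$ is a noncompact regular Riemannian covering of a compact manifold $N$. The deck group $\Gamma$ acts by isometries of the pulled-back metric, which is complete since $N$ is compact. Regularity means $\Gamma$ acts transitively on fibers. Take a compact $D\subset M$ with $\pi(D)=N$; for instance, a finite union of closed lifts of small balls covering $N$. Then $\Gamma\cdot D=M$, so the action is cocompact and the first part applies.

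There is no serious obstacle. The only points needing care are that completeness guarantees the limit geodesic is defined on all of $\R$, and that noncompactness together with completeness gives unbounded diameter, so the segments $\sigma_n$ exist.
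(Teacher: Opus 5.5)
Your proposal is correct and follows essentially the same route as the paper: long minimizing segments parametrized from their midpoints, translated by $\Gamma$ so the midpoint lies in a compact set $K$ with $\Gamma K = M$, and a limit line extracted via compactness of the unit sphere bundle over $K$. You also spell out why the deck group of a regular covering of a compact manifold acts cocompactly, a step the paper leaves implicit for the ``in particular'' clause.
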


\begin{proof}
Fix $p\in M$ and choose $x_i\in M$ such that
\(
    L_i:=d(p,x_i)\to\infty.
\)
Let
\(
    \sigma_i:
    [-\frac{L_i}{2},\frac{L_i}{2}]
    \to M
\)
be a unit-speed minimizing geodesic from $p$ to $x_i$, parametrized so that $\sigma_i(0)$ is its midpoint.
Cocompactness provides a compact set $K\subset M$ such that
\(
    \Gamma K=M.
\)
Choose $\gamma_i\in\Gamma$ with
\(
    \gamma_i\sigma_i(0)\in K.
\)
The unit tangent vectors
\(
    (\gamma_i)_*\dot{\sigma}_i(0)
\)
lie in the unit tangent bundle over the compact set $K$. After passing to a subsequence, they converge to a unit vector $v\in T_qM$ for some $q\in K$.
Let
\(
    \sigma:\mathbb{R}\to M
\)
be the complete geodesic with
\(
    \sigma(0)=q,
    \dot{\sigma}(0)=v.
\)
For every compact interval $[a,b]\subset\mathbb{R}$, the translated
geodesics $\gamma_i\circ\sigma_i$ are defined and minimizing on $[a,b]$ for all sufficiently large $i$. Passing to the limit gives
\(
    d(\sigma(a),\sigma(b))=b-a.
\)
Hence $\sigma$ is a line.
\end{proof}



\begin{definition}
\label{def:busemann}
Let $\gamma\colon [0,\infty)\to M$ be a ray. The \textbf{Busemann function} associated to $\gamma$ is
\[
    b_\gamma(x)
    :=
    \lim_{t\to\infty}
    \bigl(d(x,\gamma(t))-t\bigr).
\]
\end{definition}




Suppose now that
$\gamma\colon\mathbb{R}\longrightarrow M$
is a line. Its two ends determine Busemann functions
\[
    b^{\pm}(x)
    :=
    \lim_{t\to\infty}
    \bigl(d(x,\gamma(\pm t))-t\bigr)
\]
which satisfy
\[
    b^+(\gamma(s))=-s,
    \qquad
    b^-(\gamma(s))=s.
\]
The triangle inequality gives the fundamental relation
\(
    b^+(x)+b^-(x)\geq 0.
\)

\subsection{The Cheeger--Gromoll splitting theorem}

We recall the classical Riemannian splitting theorem of Cheeger and Gromoll
\cite{CGsplitting}.

\begin{theorem}[Cheeger--Gromoll splitting theorem]
\label{thm:cheeger-gromoll}
Let $(M^n,g)$ be a complete Riemannian manifold satisfying
$
    \Ric_g\geq 0.
$
If $M$ contains a line, then there exists a complete Riemannian manifold
$(N^{n-1},h)$ with $\Ric_h \ge 0$ and an isometry
\[
    (M,g)
    \cong
    \bigl(\mathbb{R}_t \times N,dt^2+h\bigr).
\]
\end{theorem}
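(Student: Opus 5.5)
The plan is to follow the classical route through Busemann functions, using only the tools recalled above. Let $\gamma$ be the given line and $b^\pm$ its two Busemann functions.

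\textbf{Step 1: Laplacian comparison.} The first step is to show that $\Delta b^\pm\le 0$ in the barrier (or viscosity) sense. At a point $x$, take an asymptotic ray from $x$ toward the relevant end of $\gamma$. The functions
\[
b_{x,t}(y)=d\bigl(y,\sigma(t)\bigr)-t+b^\pm(x)
\]
are smooth near $x$ and are upper barriers for $b^\pm$ at $x$. The Laplacian comparison $\Delta d\le (n-1)/d$, valid since $\Ric_g\ge0$, gives $\Delta b_{x,t}(x)\le (n-1)/t$, which tends to $0$. Hence $b^\pm$ is superharmonic in the barrier sense.

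\textbf{Step 2: Maximum principle.} Set $f=b^++b^-$. Then $f\ge0$ by the triangle inequality, $f\equiv0$ on $\gamma$, and $f$ is superharmonic in the barrier sense. Calabi's strong maximum principle for barrier superharmonic functions then forces $f\equiv0$, because a superharmonic function attaining its minimum at an interior point is constant. Consequently $b^+=-b^-$ is both super- and subharmonic, so it is harmonic in the barrier sense. Weyl's lemma, via the weak formulation obtained from the barriers, shows that $b^+$ is a smooth harmonic function.

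\textbf{Step 3: Bochner formula.} Since $b^+$ is $1$-Lipschitz and equals a distance function along asymptotic rays, $|\nabla b^+|=1$. The Bochner formula reads
\[
\tfrac12\Delta|\nabla b|^2=|\nabla^2b|^2+\langle\nabla b,\nabla\Delta b\rangle+\Ric(\nabla b,\nabla b).
\]
The left side is $0$, the middle term vanishes by harmonicity, and $\Ric\ge0$. It follows that $\nabla^2 b^+=0$, so $V=\nabla b^+$ is a parallel unit vector field.

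\textbf{Step 4: De Rham-type splitting.} A parallel field is Killing, and it is complete since $g$ is complete. Its flow $\Phi_t$ consists of isometries with $b^+(\Phi_t(x))=b^+(x)+t$. Put $N=(b^+)^{-1}(0)$, which is a smooth totally geodesic hypersurface because $\nabla^2b^+=0$, and let $h$ be the induced metric. The map $\mathbb R\times N\to M$, $(t,y)\mapsto\Phi_t(y)$, is a diffeomorphism, since every flow line crosses each level set exactly once. It is an isometry for $dt^2+h$ because $V$ is unit, orthogonal to $N$, and the flow preserves $g$. Completeness of $h$ follows because $N$ is closed in $M$ and totally geodesic. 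Finally, $\Ric_h\ge0$ holds because $\Ric_g=0\oplus\Ric_h$.

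The main obstacle is Step 2: establishing the barrier Laplacian inequality carefully, and then upgrading barrier harmonicity to smoothness. For the upgrade I would cite the Calabi and Eschenburg--Heintze arguments rather than reproduce the elliptic regularity.
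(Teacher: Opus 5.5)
Your proposal is correct and follows essentially the same route as the paper's sketch. Both arguments run through Laplacian comparison for $b^\pm$, the strong maximum principle forcing $b^++b^-\equiv 0$, regularity of the resulting harmonic function, and the Bochner formula giving $\nabla^2 b^+=0$; both then use the flow of the parallel unit field $\nabla b^+$ to split $M$ with $N=(b^+)^{-1}(0)$. You work in the barrier sense (Calabi, Eschenburg--Heintze) where the paper says ``weak sense,'' and you also spell out the diffeomorphism, isometry, completeness and $\Ric_h\ge 0$ details that the paper leaves implicit.
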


The proof proceeds by showing \(
    \Delta b^\pm\leq 0
\) 
in the weak sense by Laplacian comparison
under $\Ric_g\geq 0$. The strong minimum principle therefore yields
$b^+ + b^- \equiv 0$, and hence both $b^+$ and $b^-$ are weakly harmonic.
Elliptic regularity then implies that they are smooth. Moreover,
$|\nabla b^+|\equiv 1$, and the Bochner formula gives
\[
    0
    =
    \frac{1}{2}\Delta |\nabla b^+|^2
    =
    |\nabla^2 b^+|^2
    +
    \Ric_g(\nabla b^+,\nabla b^+),
\]
so $\nabla^2 b^+=0$. Thus $\nabla b^+$ is a parallel unit vector field, and
its flow yields the global isometric splitting where $N=(b^+)^{-1}(0)$.



\subsection{Maximal Euclidean factor}
For a closed Riemannian manifold $(M,g)$ with $\text{Ric}_g\geq 0$, one can apply the splitting theorem iteratively to its universal
cover $(\overline M,\overline g)$, which yields an isometric decomposition
\[
    (\overline M,\overline g)
    \cong
    (\mathbb{R}^k\times N,g_{0}+h).
\]
Here,  $(N,h)$ is complete, simply-connected and contains no line; and $g_0$ denotes the flat Euclidean metric on $\R^k$. Since
$\pi_1(M)$ acts cocompactly on $\overline M$ by deck transformations, the
induced action on the factor $N$ is also cocompact. Hence
\autoref{lem:cocompact-line} implies that $N$ must be compact. The integer
$k$, namely the dimension of the maximal Euclidean factor of $\overline M$,
is uniquely determined by $(M,g)$ and is called the {\bf Euclidean rank} of $\overline M$. Moreover, $\pi_1(M)$ contains $\Z^k$ as a subgroup of finite index. In particular, $\pi_1(M)$ is virtually abelian.


\section{Splitting almost K\"ahler manifolds}\label{section:AKsplitting}


This section is devoted to the proof of \autoref{thm:main}. We write
\(
    \Delta_H:=dd^*+d^*d
\)
for the Hodge Laplacian and use
\(
    \Delta_g f:=\operatorname{div}_g(\nabla f)
\)
for the Laplace--Beltrami operator on functions. Thus
\(
    \Delta_H f=-\Delta_g f
\)
for every smooth function $f$.

\begin{lemma}\label{lem:parallel}
    Let $(X,\omega,J,g)$ be an almost K\"ahler manifold with nonnegative Ricci curvature. Suppose that $V$ is a parallel vector field on $X$. Then $JV$ is parallel.
\end{lemma}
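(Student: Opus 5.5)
The plan is to dualize and run a Bochner argument on the $1$-form $\beta:=\iota_V\omega$. Up to a sign convention this is the metric dual of $JV$: $\beta(Y)=\omega(V,Y)=-g(JV,Y)$, or the opposite sign depending on conventions. So it suffices to show that $\beta$ is parallel. Note that $|\beta|=|V|$ is constant because $V$ is parallel. The idea is to show that $\beta$ is harmonic; the Bochner formula together with $\Ric_g\ge 0$ then forces $\nabla\beta=0$.

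\emph{Step 1: $V$ is Killing and commutes with the natural operators.} Since $\nabla V=0$, the vector field $V$ is Killing, so its flow consists of local isometries. Hence $L_V$ commutes with the Hodge star, and therefore with $d^*$. By Cartan's formula and $d\omega=0$,
\[
d\beta=d\iota_V\omega=L_V\omega .
\]

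\emph{Step 2: $\beta$ is harmonic.} Two facts about $d^*$ are needed.
\begin{itemize}
\item On an almost K\"ahler manifold the K\"ahler form is coclosed. Indeed, $d^*\omega=\pm J\,d\omega$ up to standard identities, or equivalently $\sum_i(\nabla_{e_i}J)e_i=0$. Hence
\[
\Delta_H\beta=d^*d\beta=d^*L_V\omega=L_Vd^*\omega=0 .
\]
\item For $d^*\beta$, I use $\nabla\beta=\nabla(\iota_V\omega)=\iota_V\nabla\omega$, which holds because $V$ is parallel. Then
\[
d^*\beta=-\sum_i(\nabla_{e_i}\omega)(V,e_i),
\]
which equals $\pm g\bigl(V,\sum_i(\nabla_{e_i}J)e_i\bigr)=0$, again by coclosedness.
\end{itemize}
The second bullet is not even needed, since the Bochner argument below only uses $\Delta_H\beta=0$.

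\emph{Step 3: Bochner.} The Weitzenb\"ock formula for $1$-forms is $\Delta_H\beta=\nabla^*\nabla\beta+\Ric(\beta^\sharp,\cdot)$. Pairing it with $\beta$ gives
\[
0=\tfrac12\Delta_g|\beta|^2=|\nabla\beta|^2-\langle\Delta_H\beta,\beta\rangle+\Ric_g(\beta^\sharp,\beta^\sharp)=|\nabla\beta|^2+\Ric_g(JV,JV).
\]
Both terms on the right are nonnegative because $\Ric_g\geq 0$. Hence $\nabla\beta=0$ pointwise, and so $JV=\mp\beta^\sharp$ is parallel. This argument is pointwise, so completeness of $g$ is not needed.

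\emph{Main obstacle.} The step I expect to need the most care is Step 2: checking that $d^*$ commutes with $L_V$, and that $d^*\omega=0$ on almost K\"ahler manifolds. The latter is the classical fact that an almost K\"ahler structure is cosymplectic: $\omega$ is closed, and $\omega$ is coclosed since $*\omega=\omega^{n-1}/(n-1)!$ is also closed. Indeed,
\[
d\omega^{n-1}=(n-1)\,d\omega\wedge\omega^{n-2}=0,
\]
so this identity is immediate. The commutation is also standard: isometries commute with $*$, and hence $L_V$ commutes with $*$ for Killing $V$. With these two checks in place, the rest is the routine Bochner computation above.
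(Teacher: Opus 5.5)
Your proof is correct and follows the paper's argument almost line for line. It uses the same $1$-form $\iota_V\omega$, the coclosedness $d^*\omega=0$ via $*\omega=\omega^{n-1}/(n-1)!$, the identity $\nabla(\iota_V\omega)=\iota_V\nabla\omega$ to get $d^*\beta=0$, the commutation of $\mathscr{L}_V$ with $d^*$ to get $d^*d\beta=0$, and finally the Bochner formula.

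One correction: your remark that the second bullet is not needed is false. Since $\Delta_H\beta=d^*d\beta+d\,d^*\beta$, writing $\Delta_H\beta=d^*d\beta$ in the first bullet already presupposes $d^*\beta=0$. Without it, the Bochner identity would contain the extra term $\langle d(d^*\beta),\beta\rangle$, which has no reason to vanish pointwise, and no integration is available on a possibly noncompact $X$. So the second bullet is essential; you did prove it correctly, so the argument as a whole stands.
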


\begin{proof}
    Define $\alpha := \iota_V \omega$. 
    Let $\flat \colon \Gamma(TX) \to \Omega^1(X)$ denote the isomorphism induced by the metric $g$.
    By the compatibility relation $g(-,-) = \omega(-,J-)$, 
    we have \[\alpha = (JV)^\flat.\]
    Furthermore, \[|\alpha|^2 = |(JV)^\flat|^2 = |JV|^2=|V|^2\] is constant. 
    We wish to prove that $\alpha$ is a harmonic 1-form: 
    \begin{equation}\label{eq:alpha_harmonic}
        \Delta_H \alpha = (dd^* + d^* d)\alpha = 0.
    \end{equation}
    Combined with the Bochner--Weitzenb\"ock formula (\cite[Lemma 3.4 and Remark 3.2]{Li2012})
    \begin{equation}\label{eq:Bochner_formula}
        \frac{1}{2} \Delta_g |\alpha|^2 = -\langle\Delta_H\alpha, \alpha\rangle+|\nabla \alpha|^2 + {\Ric}(JV,JV),
    \end{equation}
    we obtain 
    $$ 0 = |\nabla \alpha|^2 + {\Ric}(JV,JV),$$
    and the non-negativity of $\Ric$ forces $\nabla \alpha = 0$. Dualizing, we conclude that $JV$ is parallel.

    Let us prove \eqref{eq:alpha_harmonic}. On an almost K\"ahler manifold, the identity $*\omega = \omega^{n-1}/(n-1)!$ implies
    $$ d^*\omega  = - * d  *\omega = \frac{-1}{(n-1)!} * d(\omega^{n-1}) =0.$$
    Consequently, taking $\{e_j\}$ to be an orthonormal frame field on $X$, we obtain:
    \begin{align*}
        d^*\alpha &= -\sum_j \iota_{e_j} (\nabla_{e_j}\alpha)=\sum_j\big(\alpha(\nabla_{e_j}e_j)-e_j(\alpha(e_j))\big) \\
        &=\sum_j\big(\omega(V,\nabla_{e_j}e_j)-e_j(\omega(V,e_j))\big)=-\sum_j\big((\nabla_{e_j}\omega)(V,e_j)+\omega(\nabla_{e_j}V,e_j)\big)\\
        &= -\sum_j(\nabla_{e_j}\omega)(V,e_j)= \iota_V\big(\sum_j\iota_{e_j}(\nabla_{e_j}\omega)\big)=\iota_V(-d^*\omega) = 0.
    \end{align*}
    This deals with the $dd^*\alpha$ term in \eqref{eq:alpha_harmonic}. For the $d^*d\alpha$ term, we employ Cartan's formula:
    \[
    d\alpha = (d\iota_V + \iota_V d)\omega = \mathscr L_V \omega.
    \] 
   Since $V$ is parallel, it is a Killing vector field.
Consequently, the flow $\Phi_t$ of $V$ consists of
orientation-preserving isometries. The Hodge star is natural under
orientation-preserving isometries, and hence
\[
    \Phi_t^*\circ *=*\circ\Phi_t^*.
\]
Differentiating at $t=0$, we see that $\mathscr{L}_V$ commutes with $*$.
Since $\mathscr L_V$ also commutes with $d$ by Cartan's formula, it commutes with the
codifferential $d^*=\pm * d \, *$. Therefore
\[
    d^*d\alpha
    =
    d^*\mathscr L_V\omega
    =
    \mathscr L_Vd^*\omega
    =
    0.
\]
In total, we see that $(d^*d +dd^*)\alpha=0$.
Therefore $\alpha$ is harmonic, and by the Bochner-Weitzenb\"ock formula \eqref{eq:Bochner_formula}, $\alpha$ is parallel. Hence $JV$ is parallel.
\end{proof}

\begin{remark}
    When $J$ is integrable, \autoref{lem:parallel} is immediate from the fact that $\nabla J = 0$ and the Leibniz rule.
\end{remark}

We now prove our main result, \autoref{thm:main}:

\begin{proof}[Proof of \autoref{thm:main}]
    Applying the Cheeger--Gromoll splitting \autoref{thm:cheeger-gromoll} to the line $\gamma$, we obtain a Riemannian isometry
    \begin{equation}\label{eq:first_splitting}
        (X,g) \cong (\mathbb{R}_s \times Y, ds^2 + h),
    \end{equation}
    where $(Y,h)$ is a Riemannian manifold with nonnegative Ricci curvature. Let $b^+$ denote the Busemann function on $X$ associated to one end of the line $\gamma$, and set $V := \nabla b^+$. Because $V$ is parallel, \autoref{lem:parallel} implies that $J V$ is also  parallel. 

    Denote by $\nabla^X$ and $\nabla^Y$ the Levi-Civita connections on $(X,g)$ and $(Y,h)$ respectively. We claim that $JV$ is a parallel vector field on $(Y,h)$ with respect to $\nabla^Y$. Indeed, since $J$ is compatible with $g$, we have \( g(V,JV)=0. \) As $V$ is sent to $\partial_s$ by the isometry \eqref{eq:first_splitting}, it follows that $JV$ is tangent to the slices $\{s\}\times Y$. Moreover, for every vector field $U$ tangent to $Y$, the Levi-Civita connection of the product metric $ds^2 + h$ satisfies \(\nabla^Y_U(JV)=\nabla^X_U(JV)=0. \) Thus $JV$ is parallel on $(Y,h)$.

    Let $(\overline{Y},\overline{h})$ be the Riemannian universal cover of $(Y,h)$. The lift of $JV$ is again a unit parallel vector field on $(\overline Y,\overline h)$. By the de Rham decomposition theorem (\cite{deRham1952}, \cite[Theorem 10.3.1]{Petersen2016}), there is an isometric splitting 
    \[(\overline{Y},\overline{h})\cong(\R_t\times X',dt^2+g'),\]
    where we choose the coordinate $t$ so that the lift of $JV$ via the universal cover $\pi \colon \overline X \to X$ is identified with $\partial_t$.
   Hence, the Riemannian universal cover of $X$ splits accordingly:
    \begin{equation}\label{eq:second_splitting}
        (\overline X, \overline g) \cong (\R_s\times\R_t \times  X', ds^2 + dt^2 + g').
    \end{equation}
   Under this identification, 
   \[ \partial_s=\pi^*V, \qquad \partial_t=\pi^*(JV), \] and therefore \begin{equation}\label{eq:Jsplit}
       \overline J\partial_s=\partial_t, \qquad \overline J\partial_t=-\partial_s.
   \end{equation}  
    Next we prove that the symplectic and almost complex structures on $\overline X$ split as well. Regarding the symplectic form $\overline \omega$, the compatibility relation $\overline g(-,-) = \overline \omega(-,\overline J-)$ and de Rham isometry \eqref{eq:second_splitting} readily imply
    \begin{equation}\label{eq:ds_dt}
        \iota_{\partial_s}\overline \omega = (\overline{J}\partial_s)^\flat = dt \qand \iota_{\partial_t} \overline \omega = (\overline{J}\partial_t)^\flat = - ds.
    \end{equation}
    In particular, $\overline \omega(\partial_s, \partial_t) = 1$  and $\overline\omega$ has no mixed terms between $\operatorname{span}_{\mathbb R}\{\partial_s,\partial_t\}$ and $TX'$. Moreover, the flows of $\partial_s$ and $\partial_t$ preserve $\overline \omega$, by Cartan's formula and \eqref{eq:ds_dt}. Hence, 
    \begin{equation}\label{eq:omegasplit}
        \overline \omega = ds \wedge dt + \omega', 
    \end{equation} 
    where $\omega'$ is a $2$-form pulled back from a symplectic form on $X'$. 

    Since the metric $\overline{g}$ and the symplectic form $\overline{\omega}$ both split, the almost complex structure $\overline{J}$ also splits as $i\oplus J'$ by the compatibility condition. By \eqref{eq:Jsplit} and \eqref{eq:omegasplit}, $i$ is the standard complex structure on $\C$ and $J'$ is a bundle endomorphism pulled back from an almost complex structure on $X'$. Therefore $(\overline X, \overline \omega, \overline J, \overline g)$ is a split almost K\"ahler manifold.
\end{proof}

    
Applying \autoref{thm:main} repeatedly yields the following structure result.

\begin{corollary}\label{cor:AK_structure}
    Let $(X,\omega,J,g)$ be a closed almost K\"ahler manifold with
    $\Ric_g \geq 0$. If
    \[
    (\overline X, \overline g) \cong (\mathbb R^k\times N,g_{0}+g_N)
    \]
    is its splitting with maximal Euclidean factor $\R^k$ in the Riemannian sense, then 
    $k=2r$ and the splitting is compatible with the almost K\"ahler structure:
    \[
    (\overline X, \overline \omega, \overline J, \overline g) \cong (\C^r\times N, \omega_{0}+ \omega_N, J_0 \oplus J_N, g_{0}+g_N).
    \]
    Here $(N,\omega_N,J_N,g_N)$ is almost K\"ahler and contains no line, and $(\C^r, \omega_0, J_0, g_0)$ is the Euclidean K\"ahler structure on $\C^r$.
\end{corollary}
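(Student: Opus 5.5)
We argue by induction on $k$, peeling off one complex line at a time with \autoref{thm:main} applied to successive simply connected factors. If $k=0$ there is nothing to prove. If $k\geq 1$, then $\overline X$ contains a line: take any line in the $\R^k$ factor times a point of $N$. Since $\overline X$ is simply connected, \autoref{thm:main} (whose universal cover is $\overline X$ itself) gives an almost K\"ahler splitting $\overline X\cong \C\times X_1$. Here $(X_1,\omega_1,J_1,g_1)$ is almost K\"ahler, simply connected, complete, and has $\Ric\geq 0$, because a Riemannian product has nonnegative Ricci curvature exactly when both factors do. If $X_1$ contains a line, we apply \autoref{thm:main} again. Iterating, we obtain
\[
(\overline X,\overline\omega,\overline J,\overline g)\cong(\C^r\times N',\omega_0+\omega_{N'},J_0\oplus J_{N'},g_0+g_{N'}),
\]
where $N'$ is an almost K\"ahler manifold containing no line. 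The process terminates because the dimension drops by $2$ at each step.

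It remains to identify $\R^{2r}\times N'$ with the Riemannian splitting $\R^k\times N$, so that $k=2r$ and $N'=N$. The key input is the uniqueness of the maximal Euclidean factor: $\R^k$ is maximal, so $N$ contains no line, and $N'$ contains no line by construction. For a complete simply connected manifold with $\Ric\geq 0$, any splitting $\R^m\times Z$ with $Z$ containing no line has $m$ equal to the Euclidean rank. One way to see this is to identify $\R^m$ with the span of all parallel vector fields arising as gradients of Busemann functions of lines, or equivalently to invoke the de Rham decomposition: the flat factor of a simply connected complete manifold is unique, and a non-flat line-free factor cannot contain a further flat de Rham factor, since that would produce a line. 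Hence $2r=k$ and $N'\cong N$ isometrically. Transporting the almost K\"ahler structure along this isometry gives the stated form, with $(N,\omega_N,J_N,g_N)$ almost K\"ahler and line-free.

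The main obstacle is this uniqueness step, that is, making sure the $\R^{2r}$ produced by iteration is the full Riemannian Euclidean factor rather than merely some Euclidean factor. First we would use that the Euclidean rank $k$ is well defined, as stated in \autoref{section:Riemannianprelim}. Then we would argue directly: $N'$ line-free and $\Ric\geq0$ implies, as in \autoref{section:Riemannianprelim} (since $\pi_1(X)$ acts cocompactly on $\overline X$ and preserves the Euclidean factor), that $N'$ is compact. Once $N'$ is compact, every line in $\R^{2r}\times N'$ projects to a line in $\R^{2r}$ times a constant, so the space of parallel fields tangent to lines is exactly $\R^{2r}$, matching the maximal factor. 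This forces $k=2r$.
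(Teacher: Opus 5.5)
Your proposal is correct and follows the paper's approach: the paper's entire proof is to apply \autoref{thm:main} repeatedly to the simply connected cover until the remaining factor contains no line, which is exactly your induction. Your extra step identifying $\R^{2r}\times N'$ with the given maximal splitting fills in what the paper leaves implicit in its appeal to uniqueness of the Euclidean rank. Either of your arguments works here: the flat factor is the holonomy-fixed de Rham factor, or, when the complementary factor is line-free, every line is a Euclidean line times a point.
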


\begin{remark}
 Applications of the Cheeger--Gromoll structure theorem in the cohomologically symplectic setting were studied by Oprea
 \cite{Oprea02}. In particular, he observed that if \(M^{2n}\) is
a closed c-symplectic manifold with nonnegative Ricci curvature, then its Euclidean rank is even, and the remaining factor $N$ is again c-symplectic. He used this structure to obtain refinements of
 Bochner-type bounds involving the Lusternik--Schnirelmann category.
 More recently, Jauhari--Oprea \cite{JO26} developed
 further Bochner-type results of this kind using distributional category. Our result is instead formulated at the level of the almost K\"ahler form itself.
\end{remark}


\section{Automatic integrability}\label{section:Kahlerness}
In this section, we apply \autoref{thm:main} to derive some criteria for the integrability of the compatible almost complex structure.
\subsection{Dimension four}
We first consider noncompact manifolds.
\begin{lemma}\label{lem:noncompact}
    Let $(X,\omega, J, g)$ be an almost K\"ahler four-manifold such that $g$ is a complete Riemannian metric with $\Ric_g\geq 0$, and $(X,g)$ contains a Riemannian line. Then $J$ is integrable.
\end{lemma}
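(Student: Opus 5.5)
The plan is to pass to the universal cover, where \autoref{thm:main} gives a product, and then check that both factors carry integrable almost complex structures. Integrability of an almost complex structure is a local property: it is equivalent to the vanishing of the Nijenhuis tensor $N_J$. Since $\pi\colon\overline X\to X$ is a local diffeomorphism and $\overline J=\pi^*J$, we have $N_{\overline J}=\pi^*N_J$. So it suffices to show that $\overline J$ is integrable.

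By \autoref{thm:main}, applied to the line $\gamma$, the universal cover splits:
\[
(\overline X,\overline\omega,\overline J,\overline g)\cong(\C\times X',\, ds\wedge dt+\omega',\, i\oplus J',\, ds^2+dt^2+g'),
\]
where $(X',\omega',J',g')$ is an almost K\"ahler manifold with $\dim_\C X'=\dim_\C X-1=1$. Thus $X'$ is a real surface, and $J'$ is an almost complex structure on a $2$-manifold.

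Every almost complex structure on a real surface is integrable. Concretely, $N_{J'}(U,W)$ is skew in $U,W$ and satisfies $N_{J'}(U,J'U)=-J'N_{J'}(U,U)=0$. Since $U$ and $J'U$ span each tangent plane wherever $U\neq 0$, we get $N_{J'}\equiv0$. The structure $i$ on $\C$ is integrable.

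It remains to see that a product of integrable almost complex structures is integrable. In local coordinates $(s,t,x)$ with $x$ a coordinate on $X'$, the operator $\overline J=i\oplus J'$ preserves the two factors. The components of $i$ are constant, and the components of $J'$ depend only on $x$. Hence the Nijenhuis tensor on pairs of vector fields pulled back from the factors splits as $N_i\oplus N_{J'}$. Mixed pairs contribute nothing: vector fields lifted from different factors commute, and $\overline J$ maps each such lift to a lift from the same factor. Both summands vanish, so $N_{\overline J}=0$, and therefore $N_J=0$ on $X$.

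The main obstacle is entirely absorbed into \autoref{thm:main}; the only point needing care here is that the splitting is holomorphic, i.e.\ that $\overline J=i\oplus J'$ with $J'$ pulled back from $X'$. Theorem~\ref{thm:main} provides exactly this, together with the dimension count forcing $X'$ to be a surface.
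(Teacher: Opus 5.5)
Your proposal is correct and follows the paper's proof essentially verbatim: split the universal cover via \autoref{thm:main} as $\C\times X'$ with $\overline J=i\oplus J'$ and $X'$ a surface, note that $J'$ is integrable, and descend $N_{\overline J}=0$ to $N_J=0$ through the local diffeomorphism $\pi$. You also spell out why the Nijenhuis tensor vanishes on a surface and on the product, which the paper leaves implicit.
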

    \begin{proof}
    Let $(\overline X, \overline \omega, \overline J, \overline  g)$ denote the universal cover with the pullback almost K\"ahler structure. 
    By \autoref{thm:main}, $\overline X \cong \C \times \Sigma$ where $\Sigma$ is a Riemann surface, and $\overline J = i\oplus J_\Sigma$ for some almost complex structure $J_\Sigma$ pulled back from $\Sigma$. Because $\Sigma$ is a Riemann surface, $J_\Sigma$ is integrable. Since the universal cover $\pi \colon \overline X \to X$ is a local diffeomorphism, the vanishing of the Nijenhuis tensor $N_{\overline J} = 0$ implies $N_J = 0$. Hence $J$ must be integrable.
    \end{proof}

Now we address compact manifolds. 

\begin{theorem}\label{thm:dim4}
Let $(X,\omega,J,g)$ be a closed almost K\"ahler four-manifold with $\text{Ric}_g\geq0$. Then one of the following occurs:
\begin{enumerate}[nosep]
    \item $X$ is $\mathbb{CP}^1\times\mathbb{CP}^1$ or  $\mathbb{CP}^2\#k\overline{\mathbb{CP}}{}^2$. $J$ need not be integrable in general.
    \item $X$ is $S^2\times T^2$ or $S^2\tilde{\times}T^2$. In this case, $J$ is integrable.
    \item $X$ is a $K3$ surface, an Enriques surface,
    $T^4$, or a hyperelliptic surface. In this case, $g$ is Ricci-flat and $J$ is integrable.
\end{enumerate}
\end{theorem}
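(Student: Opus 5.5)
The plan is to split into cases according to the Euclidean rank $k$ of the universal cover, using \autoref{cor:AK_structure}: $k=2r$ with $r\in\{0,1,2\}$, and $(\overline X,\overline\omega,\overline J,\overline g)\cong(\C^r\times N,\omega_0+\omega_N,J_0\oplus J_N,g_0+g_N)$, where $N$ is compact and simply connected.

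\emph{Case $r\ge 1$.} Then $\overline X$ contains a line, so \autoref{lem:noncompact} applied to $\overline X$ (or directly to $X$, since the line descends to its own image only locally, so it is cleaner to argue on $\overline X$ and use that integrability is local) gives that $\overline J$, hence $J$, is integrable.
\begin{itemize}[nosep]
\item If $r=2$, then $\overline X=\C^2$ is flat, so $X$ is a flat compact K\"ahler surface. By Bieberbach, $X$ is $T^4$ or a hyperelliptic surface.
\item If $r=1$, then $N$ is a simply connected closed surface, so $N=S^2$. Hence $\overline X=\C\times S^2$ and $\pi_1(X)$ is virtually $\Z^2$. The compact complex surface $X$ is covered by $\C\times\mathbb{CP}^1$ and is therefore a ruled surface over an elliptic curve: it has a $\mathbb{CP}^1$-fibration induced from the product structure, since deck transformations preserve the splitting. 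So $X$ is $S^2\times T^2$ or $S^2\tilde\times T^2$.
\end{itemize}
This gives case (2) and the torus and hyperelliptic part of (3).

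\emph{Case $r=0$.} Here $\overline X=N$ is compact, so $\pi_1(X)$ is finite. For $b_1(X)=0$ I would use the Bochner argument: harmonic $1$-forms are parallel, and there are none since $k=0$. Then I would split according to the sign of the scalar curvature $s_g$.
\begin{itemize}[nosep]
\item If $\Ric_g$ is not identically zero, then $\int s_g>0$. Here I invoke the standard almost K\"ahler fact that $\int (s_g + s^*)/2$, or more precisely $\int s^*\,d\mathrm{vol} = 4\pi\, c_1\cdot[\omega] + \text{const}$, together with $s^*-s=\tfrac12|\nabla J|^2\ge 0$. This gives $c_1\cdot[\omega]>0$, and results of Liu and Ohta--Ono show that a symplectic $4$-manifold with $c_1\cdot[\omega]>0$ is rational or ruled. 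With $\pi_1$ finite this forces a rational surface, which is case (1). Non-integrability is possible there by \autoref{rmk:perturb}.
\item If $\Ric_g\equiv0$, then since $s=0$ and the Hermitian scalar curvature gives $\int s^* = \int s+\tfrac12\int|\nabla J|^2 \ge 0$, with $c_1\cdot[\omega]$ tied to this integral, I need integrability. Sekigawa's theorem applies: a compact almost K\"ahler Einstein manifold with $s\ge0$ is K\"ahler. Then $X$ is a Ricci-flat K\"ahler surface with finite $\pi_1$, hence K3 or Enriques.
\end{itemize}

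It remains to check that in cases (2) and (3) with $r\ge1$, $g$ is Ricci-flat exactly when claimed. For $r=2$ this is clear. For $r=1$, $S^2$ with $\Ric\ge0$ need not be flat, and the statement only claims integrability, which is consistent.

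The main obstacle is the $r=0$ case with nonzero Ricci curvature: relating $\int s_g>0$ to $c_1\cdot[\omega]>0$ through the Blair-type identity and $s^*\ge s$, and then invoking the classification of symplectic $4$-manifolds with $c_1\cdot[\omega]>0$. A secondary check is that no minimal rational or ruled surface with finite $\pi_1$ other than the listed ones arises; finite $\pi_1$ here actually forces simple connectivity for rational surfaces.
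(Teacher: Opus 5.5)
Your proof is correct, but it is organized differently from the paper's. The paper sorts by the symplectic classification rather than by Euclidean rank.
\begin{itemize}
\item If $X$ is neither rational nor ruled, Kazdan--Warner turns a non-Ricci-flat metric with $\Ric_g\ge 0$ into a conformal metric of positive scalar curvature, which Liu and Ohta--Ono rule out. So $\Ric_g\equiv 0$, Sekigawa gives integrability, and LeBrun's classification of symplectic Einstein $4$-manifolds produces the whole list in (3) at once.
\item In the irrational ruled case, virtual abelianness of $\pi_1$ forces base genus one. Bochner plus Poincar\'e--Hopf gives $\chi(X)=0$ and hence minimality, and \autoref{lem:noncompact} on $\overline X$ gives integrability.
\end{itemize}
You instead split by the rank $2r$ from \autoref{cor:AK_structure}, so the almost K\"ahler splitting itself identifies the topology when $r\ge 1$. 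For $r=0$ you replace the conformal change and the positive-scalar-curvature obstruction by the almost K\"ahler Chern--Weil identity, which feeds directly into Liu's theorem for $c_1\cdot[\omega]>0$. You also replace LeBrun by Sekigawa plus Enriques--Kodaira with finite $\pi_1$. Your route stays intrinsic to the almost K\"ahler structure and uses the splitting theorem more substantively. The paper's route is shorter: one citation covers all Ricci-flat cases, and $\chi(X)=0$ settles the ruled case without analysing the deck group.

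Three steps should be made precise when you write this up.
\begin{itemize}
\item The identity is $4\pi\,c_1\cdot[\omega]=\int_X\tfrac{s_g+s^*}{2}\,d\mu_g$, so $s^*\ge s_g$ gives $4\pi\,c_1\cdot[\omega]\ge\int_X s_g\,d\mu_g>0$. As you wrote it, with an unspecified ``const'', the sign of $c_1\cdot[\omega]$ does not follow. If you quote Liu's theorem in its form for minimal manifolds, note that $c_1\cdot[\omega]$ only increases under symplectic blow-down.
\item For $r=1$, preservation of the product splitting alone does not give an $S^2$-bundle over a torus. A priori the image of $\pi_1(X)$ in $\mathrm{Isom}(\R^2)$ could contain glide reflections, or rotations paired with free maps of $S^2$. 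The fix: each deck transformation $(\gamma_1,\gamma_2)$ preserves $\omega_0$ and $\omega_N$ separately. Hence $\gamma_2$ is an orientation-preserving isometry of $S^2$ and has a fixed point (Lefschetz number $2$), so freeness forces $\gamma_1$ to be a translation. Then $X\to\R^2/\pi_1(X)\cong T^2$ is an $S^2$-bundle.
\item For $r=2$ you need the classification of compact flat K\"ahler surfaces (complex tori and hyperelliptic surfaces), not Bieberbach alone.
\end{itemize}
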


\begin{proof}
Let us first assume $X$ is neither rational nor ruled. Since $\text{Ric}_g\geq 0$ implies the scalar curvature $s_g$ is nonnegative, by \cite{KW75} (see also \cite[Theorem 2.3]{GL80}), if $\text{Ric}_g\neq 0$, then $g$ is conformally equivalent to another metric $g'$ with $s_{g'}>0$. Liu \cite{Liu96} and Ohta--Ono \cite{OhtaOno} showed that if a symplectic four-manifold possesses \emph{any} Riemannian metric of positive scalar curvature, it is diffeomorphic to a rational or ruled manifold. Hence, by our assumption, $\Ric_g=0$. In particular, $g$ is Einstein and Sekigawa's result \cite{Sekigawa} implies that $J$ must be integrable. Moreover, LeBrun \cite[Theorem B]{LeBrun} classified all diffeomorphism types of closed four-manifolds admitting both a symplectic structure and a (possibly unrelated) Einstein metric. In the Ricci-flat case, $X$ must be a $K3$ surface, an Enriques surface,
$T^4$, or a hyperelliptic surface. This gives case (3).

Next, we assume $X$ is rational or ruled. When $X$ is rational, namely $\mathbb{CP}^1\times\mathbb{CP}^1$ or  $\mathbb{CP}^2\#k\overline{\mathbb{CP}}{}^2$,  \autoref{rmk:perturb} demonstrates that $J$ need not be integrable. This gives case (1).

Suppose that $X$ is irrational ruled, \emph{i.e.} the symplectic blowup of an $S^2$-bundle over a Riemann surface of genus $\ge 1$. The ordinary Cheeger--Gromoll splitting \autoref{thm:cheeger-gromoll} implies $\pi_1(X)$ is virtually abelian. So, $X$ cannot be a blowup of an $S^2$-bundle over a Riemann surface of genus $\geq 2$. 

The homotopy long exact sequence of a fibration implies $\pi_1(X) \cong \pi_1(T^2) \cong \Z^2$ and thus $b_1(X)=2$. By the Bochner theorem \cite[Theorem 9.2.3]{Petersen2016}, $X$ admits a nowhere zero parallel vector field, so Poincar\'e--Hopf implies that $X$ has vanishing Euler characteristic. Hence $X$ is minimal. 
To see integrability of $J$, consider the (non-compact) universal cover $(\overline{X},\overline{\omega},\overline{J},\overline{g})$.   \autoref{lem:cocompact-line} guarantees a Riemannian line in $(\overline{X},\overline{g})$, and \autoref{lem:noncompact} then forces integrability of $\overline{J}$. Since $\pi$ is a local diffeomorphism, $J$ is integrable. This gives case (2).
\end{proof}

\begin{remark}
    For $k\geq 9$, we do not know whether $\mathbb{CP}^2\#k\overline{\mathbb{CP}}{}^{2}$ admits an almost K\"ahler metric with nonnegative Ricci curvature and non-integrable $J$. The perturbation argument in \autoref{rmk:perturb} does not apply, since it relies on starting from a K\"ahler metric with positive Ricci curvature.
\end{remark}

  
\subsection{Symplectically aspherical manifolds} 
We now consider symplectically aspherical manifolds in all dimensions.
\begin{corollary}
\label{cor:aspherical_nonnegative_is_kahler-flat}
      Suppose that $(X,\omega,J,g)$ is a closed almost K\"ahler manifold such that $\Ric_g\geq 0$ and $(X,\omega)$ is symplectically aspherical. Then $(X,\omega,J,g)$ is a flat K\"ahler manifold which admits a finite cover by a complex torus.
\end{corollary}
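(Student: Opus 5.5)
Our plan is to use \autoref{cor:AK_structure}, which gives an almost K\"ahler splitting of the universal cover,
\[
(\overline X,\overline\omega,\overline J,\overline g)\cong(\C^r\times N,\omega_0+\omega_N,J_0\oplus J_N,g_0+g_N),
\]
and then to show that the factor $N$ must be a point. Recall from \autoref{section:Riemannianprelim} that $N$ is compact, because $\pi_1(X)$ acts cocompactly and $N$ contains no line. We will also use that $N$ is simply connected, since $\overline X$ is simply connected and $\C^r$ is contractible.

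Suppose $\dim N>0$. Then $N$ is a closed symplectic manifold with symplectic form $\omega_N$, so $[\omega_N]^{m}\neq 0$ in $H^{2m}(N;\R)$, where $2m=\dim N$. In particular $[\omega_N]\neq 0$ in $H^2(N;\R)$. Because $N$ is simply connected, Hurewicz gives $\pi_2(N)\cong H_2(N;\Z)$, and $H^2(N;\R)$ is dual to $H_2(N;\R)=H_2(N;\Z)\otimes\R$. Hence there is a map $f\colon S^2\to N$ with $\int_{S^2}f^*\omega_N\neq 0$.

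Now fix a point $z\in\C^r$ and consider the composite $F:=\pi\circ(z,f)\colon S^2\to X$. Since $\omega_0$ restricts to zero on $\{z\}\times N$, we get
\[
\int F^*\omega=\int_{S^2}(z,f)^*\overline\omega=\int_{S^2} f^*\omega_N\neq 0.
\]
This contradicts symplectic asphericity of $(X,\omega)$. Therefore $N$ is a point, and $(\overline X,\overline\omega,\overline J,\overline g)$ is isomorphic to the flat K\"ahler space $\C^r$ with its standard structure.

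It follows that $J$ is integrable and $g$ is flat, because both properties are local and the covering map $\pi$ is a local isomorphism of almost K\"ahler structures. So $X$ is a compact flat K\"ahler manifold, namely $\C^r/\Gamma$ with $\Gamma$ acting by holomorphic (unitary affine) isometries. By Bieberbach's theorem, the subgroup $\Gamma\cap\C^r$ of pure translations is a lattice of finite index in $\Gamma$. Then $\C^r/(\Gamma\cap\C^r)$ is a complex torus that finitely covers $X$, holomorphically and isometrically.

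The main subtle point is the step showing $[\omega_N]\neq0$ is detected by a sphere; this is where simple connectivity of $N$ is essential, and it comes from the product decomposition of the simply connected $\overline X$. The remaining step, Bieberbach's theorem, is standard.
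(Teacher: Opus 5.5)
Your proof is correct and follows the paper's argument. Both use \autoref{cor:AK_structure} and Hurewicz to rule out a positive-dimensional closed factor $N$, and your appeal to Bieberbach's theorem for the finite torus cover is exactly the content of the reference the paper cites. The only difference is that you apply Hurewicz on the simply connected $N$ to exhibit a sphere of nonzero $\omega$-area, whereas the paper applies it on $\overline X$ to conclude $[\overline\omega]=0$, hence that $\omega_N$ is exact; these are dual phrasings of the same point.
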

\begin{proof}
    \autoref{cor:AK_structure} provides the  almost K\"ahler splitting 
    $$(\overline X,\overline \omega,\overline J, \overline g) \cong (\C^r\times N, \omega_0 + \omega_N, J_0 \oplus J_N, g_{0} + g_N)$$
    where $N$ is closed. Since $(X,\omega)$ is symplectically aspherical, the pullback $\overline\omega:=\pi^*\omega$ vanishes on $\pi_2(\overline X)$. As $\overline X$ is simply connected, the Hurewicz theorem gives \( \pi_2(\overline X)\cong H_2(\overline X;\Z). \) Hence $\overline\omega$ integrates to zero over all 2-cycles, so \( [\overline\omega]=0 \text{ in }H^2_{\mathrm{dR}}(\overline X;\R) \) by de Rham's isomorphism. In particular, $\overline\omega$ and its pullback $\omega_N \in \Omega^2(N)$ are exact. But $N$ is closed, so non-degeneracy forces $\dim_{\R} N=0$. Therefore $\overline X \cong \C^r$, equipped with its standard flat K\"ahler structure $(\omega_0, J_0,g_0)$. Since the covering map is a local diffeomorphism, $(X,\omega,J,g)$ is flat K\"ahler as well. Finally, since the Euclidean rank of $(X,g)$ is  $2r = \dim_\R X$, the manifold $X$ is a finite quotient of $T^{2r}$ (\cite[Corollary 6.67]{Einsteinbook}).
\end{proof}

\begin{remark}
 The interaction between symplectic asphericity and curvature has also been studied recently by 
Di Cerbo--Dranishnikov--Jauhari \cite[\S5]{CDJ26}.
Among other results, they
prove the non-existence of Riemannian metrics of positive scalar curvature on symplectically aspherical manifolds which also admit a spin finite cover.
 It may be regarded as a higher-dimensional analogue of the Liu--Ohta--Ono theorem in dimension four, invoked in proving \autoref{thm:dim4}.
\end{remark}

\section{Symplectic non-hyperbolicity}\label{section:hyperbolic}
In this section, we use the almost K\"ahler splitting theorem  to relate non-hyperbolicity with nonnegative Ricci curvature.

\subsection{Hyperbolicity and non-hyperbolicity}
 A complex manifold $(X,J)$ is called {\bf Brody hyperbolic} if every holomorphic map $\C\to X$ is constant. 
By Brody's theorem \cite{Brody78}, this definition is equivalent to Kobayashi hyperbolicity for compact complex manifolds, namely, the Kobayashi pseudometric is a metric. 
Hyperbolicity has been extensively studied in complex geometry; see \cite{Kobayashibook,Langbook} and the survey articles \cite{Siu_survey,diverio_survey,Demailly_survey} for further background. 


The notion of Brody hyperbolicity readily extends to almost complex
manifolds: an almost complex manifold $(X,J)$ is called
{\bf hyperbolic} if every $J$-holomorphic map
\(\mathbb C\to X\)
is constant; otherwise, $(X,J)$ is called {\bf non-hyperbolic}. 
Forthcoming work of Yeung \cite{Yeung26} investigates hyperbolicity of $(X,J)$ when the almost complex structure $J$ is a non-integrable perturbation of a complex structure of general type, thereby establishing some analogues of the Green--Griffiths, Lang, and Kobayashi conjectures from the integrable setting. 
For almost complex structures compatible with some symplectic form, 
 Biolley \cite{biolley2004floer} introduced the related notion of almost K\"ahler hyperbolicity and studied its connection with a Floer-theoretic notion of symplectic hyperbolicity. Our almost K\"ahler splitting theorem immediately implies the following almost K\"ahler non-hyperbolicity result.

 \begin{corollary}\label{cor:AKnonhyperbolic}
     Let $(X,\omega,J,g)$ be a closed almost K\"ahler manifold with $\text{Ric}_g\geq 0$. If
     \begin{itemize}[nosep]
         \item $\pi_1(X)$ is infinite;
         \item or $\dim_{\mathbb{R}} X=4$,
     \end{itemize}
     then $(X,J)$ admits a nonconstant $J$-holomorphic map $\mathbb{C}\to X$. 
 \end{corollary}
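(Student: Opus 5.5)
The plan is to produce the map $\C\to X$ from the flat factor supplied by \autoref{cor:AK_structure}, projected down to $X$. Suppose first that $\pi_1(X)$ is infinite. Then the universal cover $\overline X$ is noncompact, because it covers the closed manifold $X$ with infinitely many sheets. The deck group acts cocompactly, so \autoref{lem:cocompact-line} gives a line in $(\overline X,\overline g)$. \autoref{thm:main} then yields a splitting
\[
(\overline X,\overline\omega,\overline J,\overline g)\cong(\C\times X',\,ds\wedge dt+\omega',\,i\oplus J',\,ds^2+dt^2+g').
\]
Fix a point $x'\in X'$ and set $u(z):=\pi(z,x')$. The inclusion $z\mapsto(z,x')$ is $(i,\overline J)$-holomorphic because $\overline J=i\oplus J'$. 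Since $\overline J=\pi^*J$, the map $\pi$ is $(\overline J,J)$-holomorphic, so $u$ is $J$-holomorphic. It is nonconstant because $\pi$ is a local diffeomorphism and the inclusion is an embedding, so $du\neq 0$ everywhere. Equivalently, $u$ is an isometric immersion of the flat plane.

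Now suppose $\dim_\R X=4$. If $\pi_1(X)$ is infinite, the previous case applies, so assume $\pi_1(X)$ is finite. I would then run through the trichotomy of \autoref{thm:dim4}. In case (2), $X$ is $S^2\times T^2$ or $S^2\tilde\times T^2$; these have $\pi_1$ containing $\Z^2$, so this case is already covered.

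In case (3), only the $K3$ surface and the Enriques surface have finite fundamental group. The Enriques surface is covered by $K3$, and a nonconstant entire curve in the cover projects to a nonconstant one downstairs, so it suffices to treat $K3$. There \autoref{thm:dim4} tells us $J$ is integrable, so $(X,J)$ is a complex $K3$ surface, and the Kamenova--Lu--Verbitsky theorem \cite{KLV14} gives a nonconstant holomorphic map $\C\to X$.

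Case (1) is the rational surfaces, where $J$ may be non-integrable, and this is the main obstacle. Here I would use Gromov--McDuff theory instead of curvature. A rational symplectic four-manifold contains a class $A$ for which, for every $\omega$-tame $J$, there is a $J$-holomorphic sphere in class $A$. For $\mathbb{CP}^2$ one takes the line class, for $\mathbb{CP}^1\times\mathbb{CP}^1$ a fiber class, and for blowups the class of a line, or of an exceptional curve possibly degenerating into a nodal configuration. These have nonzero Gromov--Witten invariant, so some nonconstant $J$-holomorphic sphere always exists. Precomposing with $\C\subset\mathbb{CP}^1$ gives a nonconstant $J$-holomorphic map $\C\to X$. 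The technical point to verify is that, even when the sphere class bubbles, at least one component survives as a nonconstant $J$-sphere; this follows from Gromov compactness and the nonvanishing of the invariant.
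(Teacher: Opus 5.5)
Your proof is correct and follows the paper's argument. For infinite $\pi_1$, you find a line in the universal cover and use the $\C$-factor from \autoref{thm:main}. For finite $\pi_1$ in dimension four, you reduce via \autoref{thm:dim4} to the K3 case, handled by Kamenova--Lu--Verbitsky (with the Enriques case treated through its K3 cover, which the paper does implicitly by working on $\overline X$), and to the rational case, where your Gromov--Witten/Gromov compactness argument is the same mechanism as the paper's appeal to symplectic uniruledness.
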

 \begin{proof}
     Consider the universal cover $(\overline{X},\overline{\omega},\overline{J},\overline{g})$. It suffices to find a nonconstant $\overline J$-holomorphic plane in $(\overline X,\overline J)$, since its projection to $(X,J)$ remains nonconstant. If $\pi_1(X)$ is infinite, then $(\overline X,\overline g)$ contains a Riemannian line by \autoref{lem:cocompact-line}. Hence \autoref{thm:main} applies, and the resulting almost K\"ahler splitting immediately provides a nonconstant $\overline J$-holomorphic plane.

It remains to consider the case $\dim_{\mathbb R}X=4$ with finite $\pi_1(X)$. By \autoref{thm:dim4},
$(\overline X,\overline J)$ is either a complex K3 surface or an almost
complex rational surface. In the K3 case, $\overline J$ is integrable,
and \cite[Corollary 2.2]{KLV14} implies the existence of a nonconstant
entire holomorphic curve. In the rational case, since $\overline J$ is
tamed by the symplectic form $\overline\omega$, there exists a
nonconstant $\overline J$-holomorphic sphere by the symplectic uniruledness of $(\overline{X},\overline{\omega})$ (\cite[Theorem 7.3]{Wendlbook}) and Gromov compactness. Composing such a sphere
with a nonconstant holomorphic map $\mathbb C\to\mathbb{CP}^1$ yields a
nonconstant $\overline J$-holomorphic plane.
 \end{proof}
 
 More generally, one can consider the question of whether a fixed symplectic form $\omega$ imposes hyperbolicity, or non-hyperbolicity, for \emph{every} $\omega$-tame almost complex structure. This leads to the following definition.

\begin{definition}\label{def:hyperbolic}
Let \((X,\omega)\) be a symplectic manifold.
We say that \((X,\omega)\) is {\bf hyperbolic} if, for \emph{every}
\(\omega\)-tame almost complex structure \(J\), every \(J\)-holomorphic
map
\(
\C\to X
\)
is constant.
We say that \((X,\omega)\) is {\bf non-hyperbolic} if, for \emph{every}
\(\omega\)-tame almost complex structure \(J\), there exists a
nonconstant \(J\)-holomorphic map
\(
 \C\to X
\).
\end{definition}

Bangert \cite{Bangert1998Existence} proved that the standard linear symplectic form on the $2n$-dimensional torus $T^{2n}$ is non-hyperbolic. 
Li--Wu \cite{LiWu2012Note} generalized Bangert's theorem to asymptotically standard symplectic forms on $\R^{2n}$.
Recently, Cattalani \cite{spencerhyperbolic} extended non-hyperbolicity to a wider class of symplectic manifolds stabilized by $T^2$, under suitable topological hypotheses. On the hyperbolic side, Chen--Yang \cite{ChenYang} proved that if a compact manifold $X$ is homotopy equivalent to a compact Riemannian manifold with negative sectional curvature, then any symplectic form $\omega$ on $X$ is hyperbolic.

\begin{remark}

A symplectic form is called $\tilde d$-hyperbolic if its lift to the universal cover has a bounded primitive; this notion of hyperbolicity, studied by Polterovich \cite{Polterovichhyperbolic} and K\k{e}dra \cite{Kedrahyperbolic}, mimics Gromov's K\"ahler hyperbolicity \cite{Gromovhyperbolic}.
In fact, $\tilde d$-hyperbolicity implies hyperbolicity in the sense of \autoref{def:hyperbolic}, by \cite[Theorem 4.1]{ChenYang}. (That proof is stated for compatible almost complex structures, but also holds for tamed ones.) 
\end{remark}

\subsection{Bangert's theorem}

Let
\(
\omega_0=\sum_{i=1}^n dx_i\wedge dy_i\) and \(
g_0=\sum_{i=1}^n(dx_i^2+dy_i^2)
\)
be the standard symplectic form and Euclidean metric on
\(\R^{2n}\cong\C^n\).
An almost complex structure \(J\) on \(\R^{2n}\) is called
\emph{bounded} if there exists \(C>0\) such that
\[
|Jv|_{g_0}\leq C|v|_{g_0}
\]
for every tangent vector \(v\). It is called \emph{uniformly tamed by
\(\omega_0\)} if there exists \(\alpha>0\) such that
\[
\omega_0(v,Jv)\geq \alpha |v|_{g_0}^2
\]
for every tangent vector \(v\).

The main ingredient in Bangert's theorem is the following
result.
 
 \begin{proposition}[{\cite[Proposition 2.7]{Bangert1998Existence}}]\label{prop:Bangert}
    Let \(J\) be a bounded almost complex structure on \(\R^{2n}\) which is uniformly tamed by \(\omega_0\). Then there exist numbers \(\rho_j\to\infty\) and \(J\)-holomorphic maps
\(
f_j\colon D(\rho_j)\longrightarrow \R^{2n}
\)
defined on 
\(
D(\rho_j)=\{z\in\C\mid |z|<\rho_j\},
\)
which are uniformly Lipschitz and satisfy
\(
\liminf_{j\to\infty}|f_j'(0)|>0.
\)
 \end{proposition}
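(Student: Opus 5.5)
This is Bangert's own Proposition 2.7, so the goal is to reconstruct his argument rather than appeal to anything earlier in this paper. The plan has three parts: produce large $J$-holomorphic disks through the origin with controlled energy, use a monotonicity-type estimate to obtain uniform gradient bounds, and then rescale to keep the derivative at the origin from degenerating.

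\textbf{Step 1: large disks of controlled area.} I would use the uniform taming and boundedness to compare $g_J:=\tfrac12(\omega_0(\cdot,J\cdot)+\omega_0(J\cdot,\cdot))$ with $g_0$; the two are uniformly equivalent with constants depending only on $C$ and $\alpha$. For each $R$, I would cut off $J$ outside a large ball and push it to a structure that is standard near infinity, or else quotient by a large lattice $R\Z^{2n}$ after modifying $J$ to be periodic outside $B(0,R)$. Then Gromov's nonsqueezing / monotonicity package, in the form of a Gromov--Witten invariant of $\mathbb{CP}^1\times T^{2n-2}$ or of a large ball compactified to $\mathbb{CP}^n$, yields a $J_R$-holomorphic sphere through $0$ of $\omega_0$-area at most $\pi R^2$. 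Any pseudoholomorphic curve through $0$ that leaves $B(0,R)$ has area at least $cR^2$ by monotonicity, and the constant $c$ depends only on the taming and bounded-geometry constants. So the portion of the curve inside $B(0,R)$, where $J_R=J$, is a genuine $J$-holomorphic curve whose area is comparable to $R^2$.

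\textbf{Step 2: uniform gradient bounds.} I would reparametrize suitable pieces of these curves as disks $D(\rho)$. To get Lipschitz bounds, I would run a bubbling argument in the style of the Hofer lemma. If $\sup|f'|$ were unbounded on the relevant disks, rescaling at points of near-maximal gradient would produce, in the limit, a nonconstant $J_\infty$-holomorphic plane of finite energy for some limit structure $J_\infty$, which exists by Arzel\`a--Ascoli since $J$ is bounded together with its derivatives after translation. Since $\omega_0$ is exact on $\R^{2n}$, such a finite-energy plane would extend to a sphere with zero area, hence be constant, a contradiction. This gives uniform Lipschitz bounds.

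\textbf{Step 3: nondegeneracy at the origin.} To keep $|f_j'(0)|$ bounded below, I would choose the conformal parametrization, using the area comparable to $\rho_j^2$, so that each point of the disk $D(\rho_j)$ carries area of order one. Combined with the Lipschitz bound, this prevents $f_j'(0)$ from tending to zero. One can also recenter at a point where $|f_j'|$ is at least its average, which is bounded below because area divided by $\rho_j^2$ stays bounded below.

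I expect the main obstacle to be Step 1: producing curves with area growing exactly quadratically in $R$ for a non-periodic $J$. This is where Bangert needs a careful choice of compactification and of the homology class. My first attempt would be a fibration $\mathbb{CP}^1(R)\times T^{2n-2}(R)$ with a nonzero Gromov--Witten invariant in the fiber class, with $J$ glued in by a cutoff.
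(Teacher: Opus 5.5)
The paper gives no proof of this statement; it is quoted from Bangert. Your outline therefore has to stand on its own, and as written it has a genuine gap.

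\textbf{What is not established.} The core of the proposition is producing $J$-holomorphic disks whose conformal radius is large compared with the reciprocal of their derivative. Your outline never establishes this.

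Step~1, which you single out as the main difficulty, is the routine part. A $J_R$-line through $0$ in $\mathbb{CP}^n$, with line area $\pi R^2$ and $J_R=J$ on $B(0,R')$, already gives a $J$-holomorphic piece of $\omega_0$-area at most $\pi R^2$. Only this upper bound is needed.

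Step~2 cannot work as a contradiction argument. The energies of your curves are of order $R^2\to\infty$. A rescaled limit at points where the gradient blows up is therefore a nonconstant plane of possibly \emph{infinite} energy. Such a plane contradicts nothing; it is exactly the kind of object the proposition is meant to produce. In fact the blow-up alternative is the good case. Extracting that limit would also need derivative bounds on $J$, while the hypothesis is only the $C^0$ bound $|Jv|\le C|v|$.

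Step~3 does not follow either. Dilating the domain can make the \emph{average} of $|f_j'|^2$ over $D(\rho_j)$ of order one, but not the pointwise value. An upper Lipschitz bound together with a lower bound on the average says nothing about $|f_j'(0)|$. Recentering at a point where $|f_j'|$ exceeds its average also fails: a disk of comparable radius about that point need not lie in the domain, since the energy may sit near $\partial D(\rho_j)$.

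\textbf{Two missing ideas.} First, the Lipschitz bound and the lower bound at the origin should be manufactured rather than proved, via Brody's reparametrization lemma. Let $v\colon D(1)\to\R^{2n}$ be $J$-holomorphic and smooth up to the boundary. Then $(1-|z|^2)|dv(z)|$ attains a maximum $\Lambda\ge |dv(0)|$ at some $z_1$. Precompose with the disk automorphism taking $0$ to $z_1$ and set $f(w)=v(w/\Lambda)$. This gives $|df|\le 4/3$ on $D(\Lambda/2)$ and $|df(0)|=1$. The argument uses only the conformal invariance of $J$-holomorphicity in the domain, with no derivative bounds on $J$ and no bubbling analysis.

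Second, you need a reason why this scale-invariant derivative tends to infinity for the pieces of your curves inside $B(0,R)$. Here the quadratic area bound enters through a length--area estimate, not through monotonicity. For instance, suppose the piece through $0$ is a disk parametrized by $v$ with $v(0)=0$, $|v|\to R$ at $\partial D(1)$, and $(1-|z|^2)|dv|\le K$. Then $v$ maps $D(1-2e^{-R/K})$ into $B(0,R/2)$. So every radial segment of the collar $1-2e^{-R/K}<|z|<1$ has image of length at least $R/2$. Cauchy--Schwarz then forces energy $\gtrsim R^2e^{R/K}$. Since $|dv|^2\,ds\wedge dt\le \frac{1+C^2}{\alpha}\,v^*\omega_0$, this is compatible with the area bound $\lesssim R^2$ only if $K\gtrsim R$. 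Pieces that are not simply connected need extra care. Without some argument of this kind, Steps~2--3 do not reach the conclusion.
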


Let us briefly recall how \autoref{prop:Bangert} implies Bangert's non-hyperbolicity theorem for the torus. Suppose that the almost complex structure \(J\) on $\R^{2n}$ is the lift of some almost complex structure on \(T^{2n}\) tamed by a linear symplectic form $\omega_{\mathrm{lin}}$ on $T^{2n}\cong \mathbb{R}^{2n}/\mathbb Z^{2n}$. After a linear change of coordinates on $\mathbb{R}^{2n}$, we may assume that the lift of $\omega_{\rm lin}$ is the standard symplectic form $\omega_0$. Under this change of coordinates, the standard lattice is replaced by a rank-$2n$ lattice $\Lambda \subseteq \mathbb{R}^{2n}$. The lifted almost complex structure $J$ is then $\Lambda$-periodic. Since $\mathbb{R}^{2n}/\Lambda$ is compact, $J$ is bounded and uniformly tamed by $\omega_0$.
Hence \autoref{prop:Bangert} applies. After suitable lattice
translations, the resulting \(J\)-holomorphic disks admit a subsequence locally converging to a nonconstant entire \(J\)-holomorphic curve in \(\R^{2n}\). Projecting to the quotient yields Bangert's non-hyperbolicity theorem.

 \begin{theorem}[\cite{Bangert1998Existence}]\label{thm:bangert}
    Any linear symplectic form on $T^{2n}$ is non-hyperbolic.
 \end{theorem}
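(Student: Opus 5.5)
The plan is to deduce Theorem~\ref{thm:bangert} from Proposition~\ref{prop:Bangert} by a compactness argument, following the sketch given just before the statement. Fix a linear symplectic form $\omega_{\mathrm{lin}}$ on $T^{2n}$ and an arbitrary $\omega_{\mathrm{lin}}$-tame almost complex structure $J$ on $T^{2n}$. After a linear change of coordinates, the lift of $\omega_{\mathrm{lin}}$ to $\R^{2n}$ is $\omega_0$ and the deck group is a lattice $\Lambda\subseteq\R^{2n}$ of rank $2n$. The lifted structure $\overline J$ is $\Lambda$-periodic. The functions $|\overline J v|_{g_0}$ and $\omega_0(v,\overline Jv)$, restricted to $g_0$-unit vectors, descend to continuous functions on the unit sphere bundle of the compact torus. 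The second is positive by tameness. Hence $\overline J$ is bounded and uniformly tamed by $\omega_0$.

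Next I apply Proposition~\ref{prop:Bangert}. It gives radii $\rho_j\to\infty$ and $\overline J$-holomorphic disks $f_j\colon D(\rho_j)\to\R^{2n}$ with a uniform Lipschitz constant $L$ and $|f_j'(0)|\geq c>0$ for all large $j$. Choose $\lambda_j\in\Lambda$ such that $f_j(0)-\lambda_j$ lies in a fixed compact fundamental domain $K$. Since $\overline J$ is $\Lambda$-invariant, the maps $\tilde f_j:=f_j-\lambda_j$ are still $\overline J$-holomorphic, they are $L$-Lipschitz, and $\tilde f_j(0)\in K$. On each disk $D(R)$ these maps are therefore uniformly bounded and equicontinuous.

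The main step is upgrading this $C^0$ control to $C^1$ convergence. This is needed so that the limit is $\overline J$-holomorphic and so that the lower bound on $|f'(0)|$ survives. I would invoke standard elliptic regularity for pseudoholomorphic curves, as in McDuff--Salamon. A uniform gradient bound on $D(R+1)$ gives uniform $C^{k}$ bounds on $D(R)$ for every $k$: bootstrap $\bar\partial_{\overline J}u=0$ in local coordinates, using $W^{1,p}$ bounds from the Lipschitz estimate. Alternatively, one could use Gromov compactness with bounded derivatives, which rules out bubbling. By Arzel\`a--Ascoli and a diagonal argument over $R\to\infty$, a subsequence converges in $C^\infty_{\mathrm{loc}}$ to a $\overline J$-holomorphic map $f\colon\C\to\R^{2n}$. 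Its derivative at the origin satisfies $|f'(0)|\geq c>0$, so $f$ is nonconstant.

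Finally, compose $f$ with the projection $\R^{2n}\to\R^{2n}/\Lambda\cong T^{2n}$. This projection is a local diffeomorphism intertwining $\overline J$ and $J$. The composite is therefore a $J$-holomorphic plane with nonzero derivative at $0$, hence nonconstant. Since $J$ was an arbitrary $\omega_{\mathrm{lin}}$-tame structure, $\omega_{\mathrm{lin}}$ is non-hyperbolic.
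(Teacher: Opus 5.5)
Your proposal is correct and follows the same route as the paper's sketch. You use $\Lambda$-periodicity to get a bounded, uniformly $\omega_0$-tamed $\overline J$ on $\R^{2n}$, apply Bangert's Proposition~2.7, recenter the disks by lattice translations, and project the resulting nonconstant entire limit to $T^{2n}$; the only difference is that you justify $C^\infty_{\mathrm{loc}}$ convergence by elliptic bootstrapping from the uniform Lipschitz bound, where the paper cites Arzel\`a--Ascoli together with Gromov's generalized Weierstrass theorem for the same purpose.
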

 
  \begin{remark} 
  Entov--Verbitsky
\cite[Proposition 6.1]{EVLag} (see also \cite{spaceofKahlerform}) showed that every K\"ahler-type symplectic form on \(T^{2n}\) is  diffeomorphic to a linear symplectic form.
Since the non-hyperbolicity property in
\autoref{def:hyperbolic} is invariant under symplectomorphism, it follows that Bangert's \autoref{thm:bangert} can be reformulated as every K\"ahler-type symplectic form on \(T^{2n}\) is non-hyperbolic.
\end{remark}

\subsection{Non-hyperbolicity on symplectically aspherical manifolds}
According to \autoref{cor:aspherical_nonnegative_is_kahler-flat}, a symplectically aspherical manifold $(X,\omega,J,g)$ with $\Ric_g \ge 0$ is covered by a complex torus. For such a manifold $(X,\omega,J,g)$, we now explain how Bangert's theorem implies symplectic non-hyperbolicity.

\begin{corollary*}[\autoref{thm:non-hyperbolicity}]
Let $(X,\omega,J,g)$ be a closed almost K\"ahler manifold satisfying \( \Ric_g\geq0. \) Suppose that $(X,\omega)$ is symplectically aspherical. Then, for every almost complex structure $J'$ tamed by $\omega$, there exists a nonconstant $J'$-holomorphic map \( u\colon \mathbb C\rightarrow X \).
\end{corollary*}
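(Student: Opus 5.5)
By \autoref{cor:aspherical_nonnegative_is_kahler-flat}, $(X,\omega,J,g)$ is a flat K\"ahler manifold and its universal cover is $(\overline X,\overline\omega,\overline g)\cong(\C^r,\omega_0,g_0)$, with $2r=\dim_\R X$. The deck group $\Gamma=\pi_1(X)$ acts on $\C^r$ by isometries of $g_0$ that preserve $\omega_0$. It acts properly discontinuously and cocompactly, so by Bieberbach it contains a finite-index lattice of translations $\Lambda\cong\Z^{2r}$. The plan is to lift the given $J'$ to $\C^r$, apply Bangert's \autoref{prop:Bangert} there, and project back. I will not try to reduce to a torus via symplectomorphism. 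Instead I will run Bangert's compactness argument directly on $\C^r$, using the full group $\Gamma$ (or $\Lambda$) for recentering.

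First, let $\overline{J'}=\pi^*J'$. It is tamed by $\overline\omega=\omega_0$. It is also invariant under $\Gamma$, which acts by $g_0$-isometries. By compactness of $X$, the quantities $|J'v|_g/|v|_g$ and $\omega(v,J'v)/|v|_g^2$ are bounded above and below, respectively, by positive constants on the unit sphere bundle of $X$. Since $\pi^*g=g_0$, these bounds transfer to $\C^r$. Hence $\overline{J'}$ is bounded and uniformly tamed by $\omega_0$ in Bangert's sense. \autoref{prop:Bangert} then gives $\overline{J'}$-holomorphic maps $f_j\colon D(\rho_j)\to\C^r$ with $\rho_j\to\infty$, a uniform Lipschitz constant $L$, and $|f_j'(0)|\ge c>0$ for large $j$.

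Next comes recentering and compactness. Choose a compact fundamental domain $K$ for $\Gamma$ and elements $\gamma_j\in\Gamma$ with $\gamma_j f_j(0)\in K$. The maps $\tilde f_j:=\gamma_j\circ f_j$ are still $\overline{J'}$-holomorphic, because $\overline{J'}$ is $\Gamma$-invariant. They are still $L$-Lipschitz, because $\gamma_j$ is an isometry, and they still satisfy $|\tilde f_j'(0)|\ge c$. On each fixed disk $D(R)$ the family is uniformly bounded and equicontinuous. The uniform gradient bound, combined with elliptic bootstrapping for the equation $\bar\partial_{\overline{J'}}u=0$ (whose coefficients are uniformly controlled in $C^k$, since $\overline{J'}$ is periodic), gives $C^\infty_{\mathrm{loc}}$ bounds. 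A diagonal Arzel\`a--Ascoli argument then yields a subsequence converging in $C^\infty_{\mathrm{loc}}$ to an $\overline{J'}$-holomorphic map $f\colon\C\to\C^r$. Since $C^1$ convergence holds at $0$, we get $|f'(0)|\ge c>0$, so $f$ is nonconstant. Finally, set $u=\pi\circ f$. This map is $J'$-holomorphic, and it is nonconstant because $\pi$ is a local diffeomorphism.

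The main obstacle is the compactness step, specifically justifying the higher-derivative estimates from only a Lipschitz bound. I would handle it with the standard local elliptic regularity for $J$-holomorphic curves, as in McDuff--Salamon. In local coordinates around points of $K$, an $L$-Lipschitz map from a disk of fixed radius stays in a fixed coordinate chart. There the equation becomes $\partial_s u+\overline{J'}(u)\partial_t u=0$, with $\overline{J'}$ smooth and having uniformly bounded derivatives. $W^{1,p}$ bounds then bootstrap to $C^k$ bounds. Periodicity of $\overline{J'}$ under $\Gamma$ makes all these constants uniform. A lesser point to check is that \autoref{prop:Bangert} needs only boundedness and uniform tameness, not periodicity with respect to a lattice; the statement as quoted confirms this.
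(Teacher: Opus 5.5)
Your proposal is correct and follows essentially the same route as the paper. You identify the universal cover with $(\C^r,\omega_0,g_0)$ via \autoref{cor:aspherical_nonnegative_is_kahler-flat}, lift $J'$ (bounded and uniformly tamed by compactness of $X$), apply \autoref{prop:Bangert}, recenter the disks by deck transformations, and extract a nonconstant entire curve by compactness before projecting.

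The differences are cosmetic. You justify the $C^\infty_{\mathrm{loc}}$ convergence, and in particular $C^1$ convergence at $0$, which is what rules out a constant limit, by explicit elliptic bootstrapping; the paper instead cites Gromov's generalized Weierstrass theorem. Your appeal to Bieberbach is unnecessary, since recentering by the full deck group already suffices.
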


\begin{proof}
    By \autoref{cor:aspherical_nonnegative_is_kahler-flat}, one can identify the universal cover of $(X,\omega,J,g)$ with the standard Euclidean space: \[(\overline{X},\overline{\omega},\overline{J},\overline{g})\cong(\R^{2n},\omega_0,J_0,g_0).\]
    Consider the pullback almost complex structure $\overline{J}':=\pi^*J'$ under the covering map $\pi\colon \overline{X}\rightarrow X$. Since $X$ is compact, $\overline{J}'$ must be bounded and uniformly tamed. Therefore, by \autoref{prop:Bangert}, we can obtain a sequence of uniformly Lipschitz $\overline{J}'$-holomorphic disks satisfying $\liminf_{j\to\infty}|f_j'(0)|>0$. Again, by the compactness of $X$, for each $j$ one can find some $\gamma_j\in\pi_1(X)$ such that $\gamma_j\cdot f_j(0)$ has a subsequence converging to some point in $\R^{2n}$. Denote this subsequence by $h_j$. Since $\pi_1(X)$ acts on $\overline{X}$ both holomorphically and isometrically, $h_j$ is still a sequence of uniformly Lipschitz $\overline{J}'$-holomorphic disks satisfying $\liminf_{j\to\infty}|h_j'(0)|>0$. Therefore, Arzel\'a--Ascoli theorem and Gromov's generalized Weierstrass theorem \cite[III.3.1]{hummel1997} together imply the existence of a $\overline{J}'$-holomorphic entire curve $h\colon \C\to \overline{X}$ (see the paragraph after \cite[Proposition 2.7]{Bangert1998Existence}). Then $u:= \pi\circ h$ is the desired $J'$-holomorphic map to $X$.
\end{proof}

\begin{remark}
   In real dimension four, the manifolds appearing in
\autoref{thm:non-hyperbolicity} are complex tori and hyperelliptic surfaces. In higher dimensions, they are compact flat K\"ahler manifolds admitting a finite cover by a complex torus; these belong to the class of generalized hyperelliptic manifolds considered in \cite{Catanesehyperelliptic}.
\end{remark}

\bibliographystyle{amsalpha}
	\bibliography{main}
    
\end{document}